\newtheorem{thm}{Theorem}[section]
\newtheorem{lem}[thm]{Lemma}
\newtheorem{cor}[thm]{Corollary}
\newtheorem{prop}[thm]{Proposition}
\theoremstyle{definition}
\newtheorem{defn}[thm]{Definition}
\newtheorem{rem}[thm]{Remark}
\newtheorem{rems}[thm]{Remarks}
\numberwithin{equation}{thm}
\begin{document}
\title[RIGHT $n$-ANGULATED CATEGORIES ]
{RIGHT $n$-ANGULATED CATEGORIES ARISING FROM COVARIANTLY FINITE SUBCATEGORIES}

\author{Zengqiang Lin}
\address{ School of Mathematical sciences, Huaqiao University,
Quanzhou\quad 362021,  China.} \email{lzq134@163.com}

\thanks{This work was supported  by  the Natural Science Foundation of Fujian Province (Grants No. 2013J05009)}

\subjclass[2010]{18E30, 18E10}

\keywords{ Right $n$-angulated category; $n$-angulated category; covariantly finite subcategory; quotient category.}

\begin{abstract} We define the notion of right $n$-angulated category, which generalizes the notion of right triangulated category. Let $\mathcal{C}$ be an additive category or $n$-angulated category and $\mathcal{X}$ a covariantly finite subcategory, we show that under certain conditions the quotient $\mathcal{C}/\mathcal{X}$ is a right $n$-angulated category. This result generalizes some previous work.
\end{abstract}

\maketitle

\section{Introduction}

Geiss, Keller and Oppermann introduced the notion of $n$-angulated category, which is a $``$higher dimensional" analogue of triangulated category \cite{[GKO]}. Bergh and Thaule introduced a higher $``$octahedral axiom" for an $n$-angulated category and showed that it is equivalent to the mapping cone axiom \cite{[BT1]}. Certain $(n-2)$-cluster tilting subcategories of triangulated categories \cite{[GKO]} and finitely generated free modules over certain local algebras \cite{[BT2]} give rise to $n$-angulated categories. Recently, Jasso introduced $n$-abelian categories, $n$-exact categories and algebraic $n$-angulated category \cite{[J]}. He showed that the quotient category of a Frobenius $n$-exact category has a natural structure of $(n+2)$-angulated category, which generalizes Happel's Theorem \cite[Theorem 2.6]{[Ha]}. $N$-angulated quotient categories induced by mutation pairs  were discussed in \cite{[L]}. Other properties of $n$-angulated categories can see \cite{[BT3]}.

Beligiannis and Marmaridis defined the notion of right triangulated category and showed that if $\mathcal{X}$ is a covariantly finite subcategory of an additive category $\mathcal{C}$, and if any $\mathcal{X}$-monic has a cokernel, then the quotient $\mathcal{C}/\mathcal{X}$ has a structure of right triangulated category. The main aim of this paper is to define the notion of right $n$-angulated category, and discuss the right $n$-angulated categories arising from covariantly finite subcategories. Our two main results, see Theorem \ref{thm} and Theorem \ref{thm2} for details, will generalize some previous work such as \cite[Theorem 2.12]{[BM]}, \cite[Theorem 5.11]{[J]}, \cite[Theorem 3.8]{[L]} and \cite[Theorem 7.2]{[B]}.

This paper is organized as follows. In Section 2, we  define the notion of right $n$-angulated category, recall the definitions of covariantly finite subcategory, $n$-cokernel and $n$-pushout,  and give some preliminaries. In Section 3, we  state and prove our main results, then give some applications.

\section{Definitions and preliminaries}

In this section we define the notion of right $n$-angulated category, then give some other preliminaries.

Let $\mathcal{C}$ be an additive category equipped with an endofunctor $\Sigma:\mathcal{C}\rightarrow\mathcal{C}$ and $n$ an integer greater than or equal to three. An $n$-$\Sigma$-$sequence$ in $\mathcal{C}$ is a sequence of morphisms
$$X_1\xrightarrow{f_1}X_2\xrightarrow{f_2}X_3\xrightarrow{f_3}\cdots\xrightarrow{f_{n-1}}X_n\xrightarrow{f_n}\Sigma X_1.$$
Its {\em left rotation} is the $n$-$\Sigma$-sequence
$$X_2\xrightarrow{f_2}X_3\xrightarrow{f_3}X_4\xrightarrow{f_4}\cdots\xrightarrow{f_{n-1}}X_n\xrightarrow{f_n}\Sigma X_1\xrightarrow{(-1)^n\Sigma f_1}\Sigma X_2.$$  
A $morphism\ of$ $n$-$\Sigma$-$sequences$ is  a sequence of morphisms $\varphi=(\varphi_1,\varphi_2,\varphi_3,\cdots,\varphi_n)$ such that the following diagram commutes
$$\xymatrix{
X_1 \ar[r]^{f_1}\ar[d]^{\varphi_1} & X_2 \ar[r]^{f_2}\ar[d]^{\varphi_2} & X_3 \ar[r]^{f_3}\ar[d]^{\varphi_3} & \cdots \ar[r]^{f_{n-1}}& X_n \ar[r]^{f_n}\ar[d]^{\varphi_n} & \Sigma X_1 \ar[d]^{\Sigma \varphi_1}\\
Y_1 \ar[r]^{g_1} & Y_2 \ar[r]^{g_2} & Y_3 \ar[r]^{g_3} & \cdots \ar[r]^{g_{n-1}} & Y_n \ar[r]^{g_n}& \Sigma Y_1\\
}$$
where each row is an $n$-$\Sigma$-sequence. It is an {\em isomorphism} if $\varphi_1, \varphi_2, \varphi_3, \cdots, \varphi_n$ are all isomorphisms in $\mathcal{C}$.

\begin{defn}
A {\em right} $n$-$angulated\ category$ is a triple $(\mathcal{C}, \Sigma, \Theta)$, where $\mathcal{C}$ is an additive category, $\Sigma$ is an endofunctor of $\mathcal{C}$, and $\Theta$ is a class of $n$-$\Sigma$-sequences (whose elements are called right $n$-angles), which satisfies the following axioms:

(RN1) (a) The class $\Theta$ is closed under isomorphisms, direct sums and direct summands.

(b) For each object $X\in\mathcal{C}$ the trivial sequence
$$X\xrightarrow{1_X}X\rightarrow 0\rightarrow\cdots\rightarrow 0\rightarrow \Sigma X$$
belongs to $\Theta$.

(c) For each morphism $f_1:X_1\rightarrow X_2$ in $\mathcal{C}$, there exists a right $n$-angle whose first morphism is $f_1$.

(RN2) If an $n$-$\Sigma$-sequence belongs to $\Theta$, then its left rotation belongs to $\Theta$.

(RN3) Each commutative diagram
$$\xymatrix{
X_1 \ar[r]^{f_1}\ar[d]^{\varphi_1} & X_2 \ar[r]^{f_2}\ar[d]^{\varphi_2} & X_3 \ar[r]^{f_3}\ar@{-->}[d]^{\varphi_3} & \cdots \ar[r]^{f_{n-1}}& X_n \ar[r]^{f_n}\ar@{-->}[d]^{\varphi_n} & \Sigma X_1 \ar[d]^{\Sigma \varphi_1}\\
Y_1 \ar[r]^{g_1} & Y_2 \ar[r]^{g_2} & Y_3 \ar[r]^{g_3} & \cdots \ar[r]^{g_{n-1}} & Y_n \ar[r]^{g_n}& \Sigma Y_1\\
}$$ with rows in $\Theta$ can be completed to a morphism of  $n$-$\Sigma$-sequences.

(RN4) Given a commutative diagram
$$\xymatrix{
X_1\ar[r]^{f_1}\ar@{=}[d] & X_2 \ar[r]^{f_2}\ar[d]^{\varphi_2} & X_3 \ar[r]^{f_3} & \cdots\ar[r]^{f_{n-2}} & X_{n-1}\ar[r]^{f_{n-1}} & X_n \ar[r]^{f_n} & \Sigma X_1\ar@{=}[d] \\
X_1\ar[r]^{g_1} & Y_2 \ar[r]^{g_2}\ar[d]^{h_2} & Y_3\ar[r]^{g_3} & \cdots\ar[r]^{g_{n-2}} & Y_{n-1}\ar[r]^{g_{n-1}} & Y_n \ar[r]^{g_n} & \Sigma X_1 \\
& Z_3\ar[d]^{h_3} & & & & & \\
& \vdots\ar[d]^{h_{n-2}} & & & & & \\
& Z_{n-1}\ar[d]^{h_{n-1}} & & & & & \\
& Z_n\ar[d]^{h_n} & & & & & \\
& \Sigma X_2 & & & & & \\
}$$ whose top rows and second column belong to $\Theta$. Then there exist morphisms $\varphi_i:X_i\rightarrow Y_i (i=3,4,\cdots,n)$, $\psi_j: Y_j\rightarrow Z_j (j=3,4,\cdots,n)$ and $\phi_k: X_k\rightarrow Z_{k-1} (k=4,5,\cdots,n)$ with the following two properties:

(a) The sequence $(1_{X_1},\varphi_2, \varphi_3,\cdots,\varphi_n)$ is a morphism of $n$-$\Sigma$-sequences.

(b) The $n$-$\Sigma$-sequence
$$X_3\xrightarrow{\left(
                    \begin{smallmatrix}
                      f_3 \\
                      \varphi_3 \\
                    \end{smallmatrix}
                  \right)} X_4\oplus Y_3\xrightarrow{\left(
                             \begin{smallmatrix}
                               -f_4 & 0 \\
                               \varphi_4 & -g_3 \\
                               \phi_4 & \psi_3 \\
                             \end{smallmatrix}
                           \right)}
 X_5\oplus Y_4\oplus Z_3\xrightarrow{\left(
                                       \begin{smallmatrix}
                                         -f_5 & 0 & 0 \\
                                         -\varphi_5 & -g_4 & 0 \\
                                         \phi_5 & \psi_4 & h_3 \\
                                       \end{smallmatrix}
                                     \right)}X_6\oplus Y_5\oplus Z_4$$
$$\xrightarrow{\left(
                                       \begin{smallmatrix}
                                         -f_6 & 0 & 0 \\
                                         \varphi_6 & -g_5 & 0 \\
                                         \phi_6 & \psi_5 & h_4 \\
                                       \end{smallmatrix}
                                     \right)}\cdots\xrightarrow{\scriptsize\left(\begin{smallmatrix}
             -f_{n-1} & 0 & 0 \\
             (-1)^{n-1}\varphi_{n-1} & -g_{n-2} & 0 \\
             \phi_{n-1} & \psi_{n-2} & h_{n-3} \\
             \end{smallmatrix}
             \right)}X_n\oplus Y_{n-1}\oplus Z_{n-2}$$
$$\xrightarrow{\left(
                                                       \begin{smallmatrix}
                                                         (-1)^n\varphi_n &-g_{n-1} &0 \\
                                                          \phi_n& \psi_{n-1}& h_{n-2} \\
                                                       \end{smallmatrix}
                                                     \right)}Y_n\oplus Z_{n-1}\xrightarrow{(\psi_n, h_{n-1})}Z_n\xrightarrow{\Sigma f_2\cdot h_n}\Sigma X_3 \hspace{10mm}$$
belongs to $\Theta$, and $h_n\cdot\psi_n=\Sigma f_1\cdot g_n$.
\end{defn}

\vspace{2mm}

\begin{rems} \label{rem1}
(a) If $n=3$, then the right $n$-angulated category $(\mathcal{C},\Sigma,\Theta)$ is a right triangulated category defined by Beligiannis and Marmaridis \cite{[BM]}.

(b) If $\Sigma$ is an equivalence, and the condition in axiom (RN2) is necessary and sufficient, then the right $n$-angulated category  $(\mathcal{C},\Sigma,\Theta)$ is an $n$-angulated category in the sense of  Geiss-Keller-Oppermann. See \cite[Definition 1.1]{[GKO]} and \cite[Theorem 4.4]{[BT1]}.

(c) We can define left $n$-angulated category dually. If $(\mathcal{C},\Sigma,\Theta)$ is a right $n$-angulated category, $(\mathcal{C},\Omega,\Phi)$ is a left $n$-angulated category, $\Omega$ is a quasi-inverse of $\Sigma$ and $\Theta=\Phi$, then  $(\mathcal{C},\Sigma,\Theta)$ is an $n$-angulated category.
\end{rems}

Let $\mathcal{C}$ be an additive category and $\mathcal{X}$ a subcategory of $\mathcal{C}$. Throughout this paper, when
we say that $\mathcal{X}$ is a subcategory of $\mathcal{C}$, we always mean that $\mathcal{X}$ is full and is closed under
isomorphisms, direct sums and direct summands. We recall that the quotient category $\mathcal{C}/\mathcal{X}$ has the same objects as $\mathcal{C}$ and that the set of morphisms $(\mathcal{C}/\mathcal{X})(A,B)$ is defined as the quotient group $\mathcal{C}(A,B)/[\mathcal{X}](A,B)$, where $[\mathcal{X}](A,B)$ is the set of morphisms from $A$ to $B$ which factor through some object of $\mathcal{X}$. For any morphism $f:A\rightarrow B$ in $\mathcal{C}$, we denote by $\underline{f}$ the image of $f$ under the quotient functor $\mathcal{C}\rightarrow\mathcal{C}/\mathcal{X}$.

A morphism $f:A\rightarrow B$ in $\mathcal{C}$ is called $\mathcal{X}$-$monic$ if $\mathcal{C}(B,X)\xrightarrow{\mathcal{C}(f,X)} \mathcal{C}(A,X)\rightarrow 0$ is exact for any object $X\in\mathcal{X}$. A morphism $f:A\rightarrow X$ in $\mathcal{C}$ is called a $left$ $\mathcal{X}$-$approximation$ $of$ $A$ if $f$ is $\mathcal{X}$-monic and $X\in\mathcal{X}$. The subcategory $\mathcal{X}$ is said to be a $covariantly\ finite\ subcategory$ of $\mathcal{C}$ if any object $A$ of $\mathcal{C}$ has a left $\mathcal{X}$-approximation.  We can defined $\mathcal{X}$-$epic$ morphism, $right$ $\mathcal{X}$-$approximation$ and $contravariantly\ finite\ subcategory$ dually.  The subcategory $\mathcal{X}$ is called {\em functorially finite} if it is both contravariantly finite and covariantly finite.

Let $f: A\rightarrow B$ be a morphism in $\mathcal{C}$. A {\em weak cokernel} of $f$ is a morphism
$g:B\rightarrow C$ such that  $gf=0$ and for each morphism $h: B \rightarrow D$ with $hf = 0$ there exists a (not necessarily unique) morphism $k: C\rightarrow D$ such that $h=kg$. It is easy to see that a weak cokernel $g$ of $f$ is a cokernel of $f$ if and only if $g$ is an epimorphism.

\begin{lem} (\cite[Lemma 2.1]{[J]}) \label{lem1}
Let $A=(A_i,d_i)$ and $B=(B_i,d_i')$ be complexes concentrated in nonnegative degrees, such that for all $i\geq0$ the morphism $d_{i+1}$ is a weak cokernel of $d_i$. If $f:A\rightarrow B$ and $g:A\rightarrow B$ are morphisms of complexes such that $f_0=g_0$, then there exists a homotopy $h$ between $f$ and $g$ such that $h_0=0$.
\end{lem}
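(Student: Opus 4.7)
The plan is to construct the homotopy inductively by repeatedly invoking the weak cokernel property of the differentials of $A$. Set $\delta_i := f_i - g_i$; the fact that $f$ and $g$ are chain maps gives $\delta_i\, d_{i-1} = d'_{i-1}\,\delta_{i-1}$ for every $i\geq 1$, while the hypothesis $f_0 = g_0$ gives $\delta_0 = 0$. I would take $h_0 = 0$ as prescribed and then construct $h_{i+1}\colon A_{i+1}\to B_i$ by induction on $i\geq 0$ in such a way that the homotopy identity
\begin{equation*}
\delta_i \;=\; d'_{i-1}\,h_i + h_{i+1}\,d_i
\end{equation*}
holds at each stage (with the convention $d'_{-1} = 0$).

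The inductive step rearranges to finding $h_{i+1}$ with $h_{i+1}\,d_i = \delta_i - d'_{i-1}\,h_i$. Since $d_i$ is a weak cokernel of $d_{i-1}$ (this is the hypothesis on the complex $A$ applied at index $i-1$), such an $h_{i+1}$ exists as soon as the right-hand side annihilates $d_{i-1}$. For $i\geq 2$ this is the only substantive computation and is immediate:
\begin{align*}
(\delta_i - d'_{i-1}\,h_i)\,d_{i-1}
&= \delta_i\,d_{i-1} - d'_{i-1}\,(h_i\,d_{i-1}) \\
&= d'_{i-1}\,\delta_{i-1} - d'_{i-1}\bigl(\delta_{i-1} - d'_{i-2}\,h_{i-1}\bigr) \\
&= d'_{i-1}\,d'_{i-2}\,h_{i-1} \;=\; 0,
\end{align*}
where the second equality uses the chain map property for $\delta_i\,d_{i-1}$ and the inductive hypothesis for $h_i\,d_{i-1}$, and the last uses that $B$ is a complex. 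The base cases $i=0$ and $i=1$ are handled directly: for $i=0$ one needs $h_1\,d_0 = \delta_0 = 0$, so $h_1 = 0$ works (or any lift of $0$ through $d_0$ via the weak cokernel property at index $0$); for $i=1$, once $h_1$ is so chosen, the required vanishing $(\delta_1 - d'_0\,h_1)\,d_0 = d'_0\,\delta_0 - d'_0\,(h_1\,d_0) = 0$ is automatic.

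The argument is essentially algorithmic, so there is no deep obstacle; the only point requiring care is keeping the index shift straight when invoking the weak cokernel property, namely applying it to $d_i$ regarded as a weak cokernel of $d_{i-1}$ rather than to $d_{i+1}$ as a weak cokernel of $d_i$. Beyond that bookkeeping, the construction is a routine induction that produces the desired homotopy $h$ with $h_0 = 0$.
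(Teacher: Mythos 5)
Your inductive construction is correct: the paper itself does not prove this lemma but cites it from Jasso (Lemma 2.1 of \cite{[J]}), and your degree-by-degree induction, using that $d_i$ is a weak cokernel of $d_{i-1}$ to solve $h_{i+1}d_i=\delta_i-d'_{i-1}h_i$, is exactly the standard argument given there. The only cosmetic slip is the parenthetical at $i=0$ invoking a ``weak cokernel property at index $0$'' for $d_0$, which the hypotheses do not provide and which is not needed since $h_1=0$ works directly.
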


\begin{defn} (\cite[Definition 2.2]{[J]})
Let $\mathcal{C}$ be an additive category and $f_0: A_0\rightarrow A_1$ a morphism in $\mathcal{C}$. An $n$-$cokernel\ of$ $f_0$ is a sequence
$$(f_1,f_2,\cdots,f_n): A_1\xrightarrow{f_1}A_2\xrightarrow{f_2}\cdots\xrightarrow{f_n}A_{n+1}$$
such that for all $k=1,2,\cdots,n-1$, the morphism $f_k$ is a weak cokernel of $f_{k-1}$, and $f_{n}$ is a cokernel of $f_{n-1}$.
In this case, we say the sequence
$$A_0\xrightarrow{f_0}A_1\xrightarrow{f_1}A_2\xrightarrow{f_2}\cdots\xrightarrow{f_n}A_{n+1} \ \ \ (2.1)$$
is $right$ $n$-$exact$.
\end{defn}

\begin{rem}
When we say $n$-cokernel we always means that $n$ is a positive integer. We note that the notion of 1-cokernel is  the same as cokernel.
we can define $n$-$kernel$ and {\em left $n$-exact sequence} dually. The sequence (2.1) is called {\em$n$-exact} if it is both right $n$-exact and left $n$-exact.
\end{rem}

\begin{defn}
Let $\mathcal{X}$ be a subcategory of $\mathcal{C}$ and $f_0:A_0\rightarrow A_1$ a morphism in $\mathcal{C}$. We say $f_0$ {\em has a special $n$-cokernel with respect to $\mathcal{X}$}, if $f_0$ has an $n$-cokernel
$$(f_1,f_2,\cdots,f_{n-1},f_n): A_1\xrightarrow{f_1}X_2\xrightarrow{f_2}\cdots\xrightarrow{f_{n-1}} X_n\xrightarrow{f_n}A_{n+1}$$
with $X_i\in\mathcal{X}$.
\end{defn}

\begin{defn} (\cite[Definition 2.11]{[J]})
Let $\mathcal{C}$ be an additive category, $A=A_0\xrightarrow{f_0} A_1\xrightarrow{f_1}\cdots\xrightarrow{f_{n-1}}A_n$ a complex and $g:A_0\rightarrow B_0$ a morphism in $\mathcal{C}$. An $n$-$pushout$ diagram of $A$ along $g$ is a morphism of complexes
$$\xymatrix{
A_0\ar[r]^{f_0}\ar[d]^{g} & A_1\ar[r]^{f_1}\ar[d]^{h_1} & \cdots \ar[r]^{f_{n-2}} & A_{n-1}\ar[r]^{f_{n-1}}\ar[d]^{h_{n-1}}  & A_n \ar[d]^{h_n}\\
B_0\ar[r]^{g_0} & B_1\ar[r]^{g_1}  & \cdots \ar[r]^{g_{n-2}} & B_{n-1}\ar[r]^{g_{n-1}} &  B_n\\
}$$
such that the mapping cone
$$\xymatrixcolsep{2.8pc}
\xymatrix{
A_0\ar[r]^{\left(
                                            \begin{smallmatrix}
                                              -f_0 \\
                                              g \\
                                            \end{smallmatrix}
                                          \right)\ \ \ }
 &  A_1\oplus B_0\ar[r]^{\left(
                                            \begin{smallmatrix}
                                              -f_{1} & 0 \\
                                              h_{1} & g_0 \\
                                            \end{smallmatrix}
                                          \right)}
& A_2\oplus B_1\ar[r]^{\ \ \left(
                                            \begin{smallmatrix}
                                              -f_{2} & 0 \\
                                              h_{2} & g_1 \\
                                            \end{smallmatrix}
                                          \right)}
& \cdots\ar[r]^{\left(
                                            \begin{smallmatrix}
                                              -f_{n-1} & 0 \\
                                              h_{n-1} & g_{n-2} \\
                                            \end{smallmatrix}
                                          \right)\ \ \ }
& A_n\oplus B_{n-1}\ar[r]^{\ \ \ \left(
                                            \begin{smallmatrix}
                                             h_{n} & g_{n-1} \\
                                            \end{smallmatrix}
                                          \right)}
& B_n
}$$
is right $n$-exact.
\end{defn}

The following lemma is useful in the proof of Theorem 3.4.

\begin{lem} \label{lem2}
Let $$\xymatrix{
A_0\ar[r]^{f_0}\ar@{=}[d] & A_1\ar[r]^{f_1}\ar[d]^{h_1} & A_2\ar[r]^{f_2}\ar[d]^{h_2} & \cdots \ar[r]^{f_{n-1}} & A_{n}\ar[r]^{f_{n}}\ar[d]^{h_{n}}  & A_{n+1} \ar[d]^{h_{n+1}}\\
A_0\ar[r]^{g_0} & B_1\ar[r]^{g_1} & B_2\ar[r]^{g_2}  & \cdots \ar[r]^{g_{n-1}} & B_{n}\ar[r]^{g_{n}} &  B_{n+1}\\
}$$ be a commutative diagram of right $n$-exact sequences. Then
$$\xymatrix{
 A_1\ar[r]^{f_1}\ar[d]^{h_1} & A_2\ar[r]^{f_2}\ar[d]^{h_2} & \cdots \ar[r]^{f_{n-1}} & A_{n}\ar[r]^{f_{n}}\ar[d]^{h_{n}}  & A_{n+1} \ar[d]^{h_{n+1}}\\
 B_1\ar[r]^{g_1} & B_2\ar[r]^{g_2}  & \cdots \ar[r]^{g_{n-1}} & B_{n}\ar[r]^{g_{n}} &  B_{n+1}\\
}$$
is an $n$-pushout diagram, in other words,
$$A_1\xrightarrow{d_{0}} A_2\oplus B_1\xrightarrow{d_1} A_3\oplus B_2\xrightarrow{d_2}\cdots\xrightarrow{d_{n-1}}A_{n+1}\oplus B_{n}\xrightarrow{d_{n}}B_{n+1}$$
is a right $n$-exact sequence, where $d_0=\left(
                                            \begin{smallmatrix}
                                              -f_1 \\
                                              h_1 \\
                                            \end{smallmatrix}
                                          \right)
$, $d_i=\left(
                                            \begin{smallmatrix}
                                              -f_{i+1} & 0 \\
                                              h_{i+1} & g_i \\
                                            \end{smallmatrix}
                                          \right)
$ $(i=1,2,\cdots, n-1)$ and $d_n=( h_{n+1} \ g_n)
$.
\end{lem}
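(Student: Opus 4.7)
The plan is to verify directly the three defining conditions for right $n$-exactness of the mapping cone: (i) consecutive composites vanish, (ii) each $d_k$ is a weak cokernel of $d_{k-1}$ for $1 \leq k \leq n-1$, and (iii) $d_n$ is a cokernel of $d_{n-1}$. Condition (i) reduces to a routine matrix computation; every entry of $d_k d_{k-1}$ is of the form $\pm f_{k+1} f_k$ (zero by right $n$-exactness of the top row), $\pm g_k g_{k-1}$ (zero by the bottom row), or $\pm h_{k+1} f_k \pm g_k h_k$ (zero by commutativity of the given diagram).

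For (ii), I will take any morphism $(u_1, u_2) : A_{k+1} \oplus B_k \to C$ with $(u_1, u_2) d_{k-1} = 0$. For $k \geq 2$ this produces both $u_1 f_k = u_2 h_k$ and $u_2 g_{k-1} = 0$; the second equation together with the fact that $g_k$ is a weak cokernel of $g_{k-1}$ yields some $v : B_{k+1} \to C$ with $u_2 = v g_k$. Then $(u_1 - v h_{k+1}) f_k = u_2 h_k - v g_k h_k = 0$, so the fact that $f_{k+1}$ is a weak cokernel of $f_k$ supplies $w : A_{k+2} \to C$ with $u_1 - v h_{k+1} = w f_{k+1}$, and a direct check shows that $(-w, v) d_k = (u_1, u_2)$. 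For the case $k = n$ in (iii) the same construction applies, now using that $g_n$ is a (strict) cokernel of $g_{n-1}$ for existence and uniqueness of $v$, and that $f_n$ is an epimorphism in order to upgrade $(u_1 - v h_{n+1}) f_n = 0$ into $u_1 = v h_{n+1}$.

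The main subtlety is the boundary case $k = 1$, because $(u_1, u_2) d_0 = 0$ yields only the single equation $u_1 f_1 = u_2 h_1$ rather than a second orthogonality relation of the form $u_2 g_0 = 0$. Here the identification of the top-left and bottom-left corners by the identity in the first column is essential: it forces $g_0 = h_1 f_0$, so one can compute $u_2 g_0 = u_2 h_1 f_0 = u_1 f_1 f_0 = 0$, recovering exactly the missing condition needed to factor $u_2$ through $g_1$, after which the argument of the previous paragraph applies verbatim.
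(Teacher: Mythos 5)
Your proof is correct and follows essentially the same route as the paper's: the same factorization through $g_k$ and then $f_{k+1}$ at each stage, the same use of $g_0=h_1f_0$ to recover $u_2g_0=0$ in the boundary case $k=1$, and the same appeal to $f_n$ being an epimorphism (and $g_n$ being a genuine cokernel) for the final step. No gaps.
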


\begin{proof}
It is easy to check that $d_{i+1}d_i=0, i=0,1,\cdots, n-1$. Let $s_0=(a_1\ b_1): A_2\oplus B_1\rightarrow C_1$ be a morphism such that $s_0d_0=0$, then $a_1f_1=b_1h_1$. Note that $b_1g_0=b_1h_1f_0=a_1f_1f_0=0$, there exists a morphism $v_1:B_2\rightarrow C_1$ such that $b_1=v_1g_1$ since $g_1$ is a weak cokernel of $g_0$. Now we have $(a_1-v_1h_2)f_1=b_1h_1-v_1g_1h_1=0$, thus there exists a morphism $u_1:A_3\rightarrow C_1$ such that $a_1-v_1h_2=u_1f_2$ since $f_2$ is a weak cokernel of $f_1$. Hence $s_0=(a_1\ b_1)=(-u_1\ v_1)\left(
                           \begin{smallmatrix}
                             -f_2 & 0 \\
                             h_2 & g_1 \\
                           \end{smallmatrix}
                         \right)
$. This shows that $d_1$ is a weak cokernel of $d_0$.

Let $s_{k-1}=(a_k \ b_k): A_{k+1}\oplus B_k\rightarrow C_k$ be a morphism such that $s_{k-1}d_{k-1}=0$ where $k=2,3,\cdots,n-1$. Then $a_kf_k=b_kh_k$ and $b_kg_{k-1}=0$. There exists a morphism $v_k: B_{k+1}\rightarrow C_k$ such that $b_k=v_kg_k$ since $g_k$ is a weak cokernel of $g_{k-1}$.
Note that $(a_k-v_kh_{k+1})f_k=b_kh_k-v_kg_kh_k=0$, there exists a morphism $u_k: A_{k+2}\rightarrow C_k$ such that $a_k-v_kh_{k+1}=u_kf_{k+1}$ since $f_{k+1}$ is a weak cokernel of $f_k$. Hence $s_{k-1}=(a_k \ b_k)=(-u_k \ v_k)\left(
                                                                                \begin{smallmatrix}
                                                                                  -f_{k+1} & 0 \\
                                                                                  h_{k+1} & g_k \\
                                                                                \end{smallmatrix}
                                                                              \right)
$. This shows that $d_k$ is a weak cokernel of $d_{k-1}$.

It remains to show that $d_{n-1}$ is a cokernel of $d_n$. Let $s_{n-1}=(a_n \ b_n): A_{n+1}\oplus B_n\rightarrow C_n$ be a morphism such that $s_{n-1}d_{n-1}=0$. Then $a_nf_n=b_nh_n$ and $b_ng_{n-1}=0$. There exists a morphism $v_n: B_{n+1}\rightarrow C_n$ such that $b_n=v_ng_n$. Now we have $(b_nh_n)f_{n-1}=b_ng_{n-1}h_{n-1}=v_ng_ng_{n-1}h_{n-1}=0$, $a_nf_n=b_nh_n$ and $(v_nh_{n+1})f_n=v_ng_nh_n=b_nh_n$, which implies that $a_n=v_nh_{n+1}$ since $f_n$ is a cokernel of $f_{n-1}$. So $s_{n-1}=(a_n \ b_n)=v_n(h_{n+1} \ g_n)$, and $d_n$ is  a weak cokernel of $d_{n-1}$. Note that $g_n$ is an epimorphism thus so is $d_n=(h_{n+1}, g_n)$.  The proof is completed.
\end{proof}

\section{Main results}

Assume that $\mathcal{C}$ is an additive category and $\mathcal{X}$ a covariantly finite subcategory.

\begin{prop} \label{prop}
If any left $\mathcal{X}$-approximation has a special $n$-cokernel with respect to $\mathcal{X}$, then there is an additive functor $\Sigma:\mathcal{C}/\mathcal{X}\rightarrow\mathcal{C}/\mathcal{X}$.
\end{prop}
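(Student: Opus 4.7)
The plan is to choose, for every $A\in\mathcal{C}$, a left $\mathcal{X}$-approximation $\alpha_A\colon A\to X_1^A$ together with a special $n$-cokernel
$$A\xrightarrow{\alpha_A}X_1^A\xrightarrow{f_1^A}X_2^A\xrightarrow{f_2^A}\cdots\xrightarrow{f_{n-1}^A}X_n^A\xrightarrow{f_n^A}\Sigma A$$
with $X_i^A\in\mathcal{X}$, and to define $\Sigma$ on objects by $A\mapsto\Sigma A$. Given a morphism $f\colon A\to B$, I build a chain lift inductively: the $\mathcal{X}$-monicity of $\alpha_A$ produces $g_1\colon X_1^A\to X_1^B$ with $g_1\alpha_A=\alpha_B f$, and then, writing $g_0:=f$, $f_0^A:=\alpha_A$, $f_0^B:=\alpha_B$, the identity $f_k^B g_k f_{k-1}^A=f_k^B f_{k-1}^B g_{k-1}=0$ together with the weak-cokernel property of $f_k^A$ furnishes $g_{k+1}$ with $g_{k+1}f_k^A=f_k^B g_k$ for $k=1,\dots,n-1$. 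The same computation at $k=n$ and the genuine cokernel property of $f_n^A$ finally produce a unique $\Sigma f$ satisfying $\Sigma f\cdot f_n^A=f_n^B g_n$.

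The main task is to show $\underline{\Sigma f}$ is independent of all choices. I prolong the $A$-sequence to a complex
$$A\xrightarrow{\alpha_A}X_1^A\xrightarrow{f_1^A}\cdots\xrightarrow{f_n^A}\Sigma A\to 0$$
concentrated in nonnegative degrees, and similarly for $B$. On the $A$-side, each $f_k^A$ is a weak cokernel of $f_{k-1}^A$ for $k=1,\dots,n$ (a cokernel being in particular a weak cokernel), and the tail $0\colon\Sigma A\to 0$ is a weak cokernel of $f_n^A$ precisely because $f_n^A$ is epic. Lemma \ref{lem1} therefore applies: any two chain lifts $(f,g_1,\dots,g_n,\Sigma f)$ and $(f,g_1',\dots,g_n',\Sigma f')$ of the same $f$ differ by a homotopy $h$ with $h_0=0$, so at degree $n+1$ one reads $\Sigma f-\Sigma f'=f_n^B h_{n+1}$, which factors through $X_n^B\in\mathcal{X}$.

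To descend to the quotient I still check that $f=\beta\alpha$ with $X\in\mathcal{X}$, $\alpha\colon A\to X$, $\beta\colon X\to B$ admits a lift with $\Sigma f=0$: the $\mathcal{X}$-monicity of $\alpha_A$ provides $\gamma\colon X_1^A\to X$ with $\gamma\alpha_A=\alpha$, so $g_1:=\alpha_B\beta\gamma$ is admissible, and then $f_1^B g_1=f_1^B\alpha_B\beta\gamma=0$ permits $g_2=\cdots=g_n=0$, $\Sigma f=0$. Combined with the previous paragraph, $\underline{\Sigma f}=0$ whenever $f\in[\mathcal{X}](A,B)$; and since the componentwise difference of two lifts of $f$ and $f'$ is itself a lift of $f-f'$, this extends to $\underline{\Sigma f}=\underline{\Sigma f'}$ whenever $\underline{f}=\underline{f'}$. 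The resulting $\Sigma\colon\mathcal{C}/\mathcal{X}\to\mathcal{C}/\mathcal{X}$ is then automatically additive and functorial, because componentwise sums and compositions of chain lifts are again chain lifts and the identity lift of $1_A$ has $\Sigma 1_A=1_{\Sigma A}$. The main obstacle is the second paragraph: arranging the augmented complex so that Lemma \ref{lem1} fires all the way through degree $n+1$ hinges essentially on $f_n^A$ being a \emph{genuine} cokernel (not merely a weak one), which is precisely what turns the tail zero map into a weak cokernel of $f_n^A$.
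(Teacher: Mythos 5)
Your proof is correct and follows essentially the same route as the paper: fix for each object a left $\mathcal{X}$-approximation with a special $n$-cokernel, lift morphisms step by step via $\mathcal{X}$-monicity and the weak-cokernel properties, and invoke Lemma~\ref{lem1} for well-definedness. You merely spell out the details (the homotopy at the top degree, the vanishing of $\underline{\Sigma f}$ for $f\in[\mathcal{X}]$, additivity and functoriality) that the paper compresses into ``it is easy to see.''
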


\begin{proof}
For any object $A\in\mathcal{C}$, fix a right $n$-exact sequence
$$A\xrightarrow{\alpha_0}X_1\xrightarrow{\alpha_1}X_2\xrightarrow{\alpha_2}\cdots\xrightarrow{\alpha_{n-1}} X_n\xrightarrow{\alpha_n}B$$
where $\alpha_0$ is a left $\mathcal{X}$-approximation of $A$ and $(\alpha_1,\alpha_2,\cdots, \alpha_n)$ is a special $n$-cokernel of $\alpha_0$.
For  any morphism $f:A\rightarrow A'$, since $\alpha_0$ is a left $\mathcal{X}$-approximation, we have the following commutative diagram
$$\xymatrix{
A\ar[r]^{\alpha_0}\ar[d]^{f} & X_1\ar[r]^{\alpha_1}\ar[d]^{x_1} & X_2\ar[r]^{\alpha_2}\ar[d]^{x_2} & \cdots \ar[r]^{\alpha_{n-1}} & X_{n}\ar[r]^{\alpha_{n}}\ar[d]^{x_{n}}  & B \ar[d]^{g}\\
A'\ar[r]^{\alpha_0'} & X'_1\ar[r]^{\alpha_1'} & X'_2\ar[r]^{\alpha_2'} & \cdots \ar[r]^{\alpha_{n-1}'} & X'_{n}\ar[r]^{\alpha_{n}'} &  B'.\\
}$$ Define a functor $\Sigma:\mathcal{C}/\mathcal{X}\rightarrow\mathcal{C}/\mathcal{X}$ such that $\Sigma A=B$ and $\Sigma \underline{f}=\underline{g}$. It is easy to see that $\Sigma$ is a well defined additive functor by Lemma \ref{lem1}.
\end{proof}

\begin{defn}\label{defn}
Let $$A_0\xrightarrow{f_0}A_1\xrightarrow{f_1}A_2\xrightarrow{f_2}\cdots\xrightarrow{f_n}A_{n+1}$$ be a right $n$-exact sequence where $f_0$ is $\mathcal{X}$-monic. Then there exists the following commutative diagram
$$\xymatrix{
A_0 \ar[r]^{f_0}\ar@{=}[d] & A_1 \ar[r]^{f_1}\ar[d]^{a_1} & A_2 \ar[r]^{f_2}\ar[d]^{a_2} & \cdots \ar[r]^{f_{n-1}} & A_{n} \ar[r]^{f_{n}}\ar[d]^{a_{n}}& A_{n+1} \ar[d]^{a_{n+1}\ \ \ \mbox{(3.1)}} \\
A_0 \ar[r]^{\alpha_0} & X_1 \ar[r]^{\alpha_1} & X_2 \ar[r]^{\alpha_2} & \cdots \ar[r]^{\alpha_{n-1}} & X_{n} \ar[r]^{\alpha_{n}} & \Sigma A_0. \\
}$$
The $n$-$\Sigma$-sequence $$A_0\xrightarrow{\underline{f_0}}A_1\xrightarrow{\underline{f_1}}A_2\xrightarrow{\underline{f_2}}\cdots\xrightarrow{\underline{f_{n}}}
A_{n+1}\xrightarrow{(-1)^n\underline{a_{n+1}}} \Sigma A_0$$ is called a $standard\ right\ (n+2)$-$angle$ in $\mathcal{C}/\mathcal{X}$. We define $\Theta$ the class of $n$-$\Sigma$-sequences which are isomorphic to standard right $(n+2)$-angles.
\end{defn}

\begin{rem}
We add the sign $(-1)^n$ in the last morphism of standard right $(n+2)$-angles. In the dual case, maybe we don't need the sign in the first morphism of standard left $(n+2)$-angles. We will see the difference in Corollary \ref{cor}(c). We also can see \cite[Lemma 2.7]{[Ha]} and \cite[Lemma 5.10]{[J]} for the sign. 
\end{rem}

\begin{lem} \label{lem3}
Let $$\xymatrix{
A_0\ar[r]^{f_0}\ar[d]^{h_0} & A_1\ar[r]^{f_1}\ar[d]^{h_1} & A_2\ar[r]^{f_2}\ar[d]^{h_2} & \cdots \ar[r]^{f_{n-1}} & A_{n}\ar[r]^{f_{n}}\ar[d]^{h_{n}}  & A_{n+1} \ar[d]^{h_{n+1}}\\
B_0\ar[r]^{g_0} & B_1\ar[r]^{g_1} & B_2\ar[r]^{g_2}  & \cdots \ar[r]^{g_{n-1}} & B_{n}\ar[r]^{g_{n}} &  B_{n+1}\\
}$$ be a commutative diagram of right $n$-exact sequences, where $f_0$ and $g_0$ are $\mathcal{X}$-monic. Then we have the following commutative diagram of standard right $(n+2)$-angles.
$$\xymatrixcolsep{2.7pc}
\xymatrix{
A_0\ar[r]^{\underline{f_0}}\ar[d]^{\underline{h_0}} & A_1\ar[r]^{\underline{f_1}}\ar[d]^{\underline{h_1}} & A_2\ar[r]^{\underline{f_2}}\ar[d]^{\underline{h_2}} & \cdots \ar[r]^{\underline{f_{n-1}}} & A_{n}\ar[r]^{\underline{f_{n}}}\ar[d]^{\underline{h_{n}}}  & A_{n+1}\ar[r]^{(-1)^n\underline{a_{n+1}}} \ar[d]^{\underline{h_{n+1}}} & \Sigma A_0 \ar[d]^{\Sigma \underline{h_0}}\\
B_0\ar[r]^{\underline{g_0}} & B_1\ar[r]^{\underline{g_1}} & B_2\ar[r]^{\underline{g_2}}  & \cdots \ar[r]^{\underline{g_{n-1}}} & B_{n}\ar[r]^{\underline{g_{n}}} &  B_{n+1}\ar[r]^{(-1)^n\underline{b_{n+1}}} & \Sigma B_0\\
}$$
\end{lem}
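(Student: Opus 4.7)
I will unfold the definitions set up in \ref{defn} and Proposition \ref{prop}. By (3.1), the top row has a lift $a_1,\dots,a_{n+1}$ against the fixed $A_0$-approximation sequence $A_0\xrightarrow{\alpha^A_0}X^A_1\to\cdots\to X^A_n\xrightarrow{\alpha^A_n}\Sigma A_0$, and the bottom row has a lift $b_1,\dots,b_{n+1}$ against the fixed $B_0$-approximation sequence $B_0\xrightarrow{\alpha^B_0}X^B_1\to\cdots\to X^B_n\xrightarrow{\alpha^B_n}\Sigma B_0$; these witness the two standard right $(n+2)$-angles. By Proposition \ref{prop} the action of $\Sigma$ on $\underline{h_0}$ is realised by a lift $y_1,\dots,y_n,\Sigma h_0$ between the two approximation sequences, satisfying $y_{i+1}\alpha^A_i=\alpha^B_i y_i$ and $\Sigma h_0\cdot\alpha^A_n=\alpha^B_n y_n$.

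Squares $1$ through $n$ of the claimed diagram commute already in $\mathcal{C}$, so they commute in $\mathcal{C}/\mathcal{X}$. The real content is the rightmost square; cancelling the matching $(-1)^n$ factors, what I must prove is
$$\Sigma h_0\cdot a_{n+1}\;\equiv\; b_{n+1}\cdot h_{n+1}\pmod{[\mathcal{X}](A_{n+1},\Sigma B_0)}.$$
To obtain this I compare two chain maps from the complex $A_0\xrightarrow{f_0}A_1\xrightarrow{f_1}\cdots\xrightarrow{f_n}A_{n+1}$ to the complex $B_0\xrightarrow{\alpha^B_0}X^B_1\to\cdots\xrightarrow{\alpha^B_n}\Sigma B_0$. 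Set $\psi:=(h_0,\,y_1a_1,\dots,y_na_n,\,\Sigma h_0\cdot a_{n+1})$, obtained by pasting the $a$-lift with the $y$-lift, and set $\varphi:=(h_0,\,b_1h_1,\dots,b_nh_n,\,b_{n+1}h_{n+1})$, obtained by pasting the hypothesised commutative diagram with the $b$-lift. Both agree with $h_0$ in degree zero. A square-by-square check, using the identities $a_{i+1}f_i=\alpha^A_ia_i$, $b_{i+1}g_i=\alpha^B_ib_i$, $h_{i+1}f_i=g_ih_i$ and the $y$-identities above, confirms that $\psi$ and $\varphi$ are genuine chain maps.

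Finally I apply Lemma \ref{lem1} to the target complex extended by zero past position $n+1$: each $\alpha^B_i$ is a weak cokernel of $\alpha^B_{i-1}$ by the definition of $n$-cokernel, $\alpha^B_n$ is an actual cokernel, and the zero maps beyond degree $n+1$ trivially satisfy the weak-cokernel condition. Since $\psi_0=\varphi_0=h_0$, the lemma yields a homotopy $s_\bullet$ between $\psi$ and $\varphi$ with $s_0=0$. Reading off the degree-$(n+1)$ component and observing that there is no $s_{n+2}$ term gives
$$\Sigma h_0\cdot a_{n+1}-b_{n+1}\cdot h_{n+1}=\alpha^B_n\circ s_{n+1},$$
which factors through $X^B_n\in\mathcal{X}$. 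Thus the rightmost square commutes in $\mathcal{C}/\mathcal{X}$, completing the proof. The only genuinely technical step is the construction and verification of the two chain maps $\psi,\varphi$ so that Lemma \ref{lem1} can be invoked; everything else is diagram bookkeeping.
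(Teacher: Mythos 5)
Your argument is correct and is essentially the paper's own proof: both paste the lift realising $\Sigma\underline{h_0}$ onto the $a$-lift for the top row, and the $b$-lift onto the given commutative diagram for the bottom row, obtaining two chain maps out of the complex $A_0\xrightarrow{f_0}\cdots\xrightarrow{f_n}A_{n+1}$ that agree in degree zero, and then invoke Lemma \ref{lem1} to get a homotopy whose top-degree component exhibits $\Sigma h_0\cdot a_{n+1}-b_{n+1}\cdot h_{n+1}$ as factoring through an object of $\mathcal{X}$. One small correction: the weak-cokernel hypothesis in Lemma \ref{lem1} is a condition on the \emph{source} complex, not the target as you state; it is nevertheless satisfied here because the top row is a right $n$-exact sequence (with $f_n$ epic, so that the zero map past degree $n+1$ is still a weak cokernel), so the application goes through unchanged.
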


\begin{proof} We only need to show that $\Sigma \underline{h_0}\cdot \underline{a_{n+1}}=\underline{b_{n+1}}\cdot\underline{h_{n+1}}$.
By the constructions of standard $(n+2)$-angles and the morphism $\Sigma \underline{h_0}$, we have the following two commutative diagrams
$$\xymatrix{
A_0 \ar[r]^{f_0}\ar@{=}[d] & A_1 \ar[r]^{f_1}\ar[d]^{a_1} & A_2 \ar[r]^{f_2}\ar[d]^{a_2} & \cdots \ar[r]^{f_{n-1}} & A_{n} \ar[r]^{f_{n}}\ar[d]^{a_{n}}& A_{n+1} \ar[d]^{a_{n+1}} \\
A_0 \ar[r]^{\alpha_0} \ar[d]^{h_0} & X_1 \ar[r]^{\alpha_1}\ar[d]^{x_1} & X_2 \ar[r]^{\alpha_2} \ar[d]^{x_2} & \cdots \ar[r]^{\alpha_{n-1}} & X_{n} \ar[r]^{\alpha_{n}} \ar[d]^{x_n} & \Sigma A_0 \ar[d]^{i_0} \\
B_0 \ar[r]^{\beta_0}  & X_1' \ar[r]^{\beta_1} & X_2' \ar[r]^{\beta_2} & \cdots \ar[r]^{\beta_{n-1}} & X'_{n} \ar[r]^{\beta_{n}} & \Sigma B_0, \\
}$$
$$\xymatrix{
A_0 \ar[r]^{f_0}\ar[d]^{h_0} & A_1 \ar[r]^{f_1}\ar[d]^{h_1} & A_2 \ar[r]^{f_2}\ar[d]^{h_2} & \cdots \ar[r]^{f_{n-1}} & A_{n} \ar[r]^{f_{n}}\ar[d]^{h_{n}}& A_{n+1} \ar[d]^{h_{n+1}} \\
B_0 \ar[r]^{g_0}\ar@{=}[d] & B_1 \ar[r]^{g_1}\ar[d]^{b_1} & B_2 \ar[r]^{g_2}\ar[d]^{b_2} & \cdots \ar[r]^{g_{n-1}} & B_{n} \ar[r]^{g_{n}}\ar[d]^{b_{n}}& B_{n+1} \ar[d]^{b_{n+1}} \\
B_0 \ar[r]^{\beta_0} & X_1' \ar[r]^{\beta_1} & X_2' \ar[r]^{\beta_2} & \cdots \ar[r]^{\beta_{n-1}} & X_{n}' \ar[r]^{\beta_{n}} & \Sigma B_0 \\
}$$ where $\Sigma \underline{h_0}=\underline{i_0}$.
Lemma \ref{lem1} implies that $\Sigma \underline{h_0}\cdot \underline{a_{n+1}}=\underline{b_{n+1}}\cdot\underline{h_{n+1}}$.
\end{proof}

Now we can state and prove our first main result.

\begin{thm} \label{thm}
Let $\mathcal{C}$ be an additive category and $\mathcal{X}$ a covariantly finite subcategory. If any $\mathcal{X}$-monic has an $n$-cokernel and any left $\mathcal{X}$-approximation has a special $n$-cokernel with respect to $\mathcal{X}$, then the quotient category $\mathcal{C}/\mathcal{X}$ is a right $(n+2)$-angulated category with respect to the functor $\Sigma$ defined in Proposition \ref{prop} and $(n+2)$-angles defined in Definition \ref{defn}.
\end{thm}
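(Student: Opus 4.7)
The plan is to verify the four axioms (RN1)--(RN4) for $(\mathcal{C}/\mathcal{X},\Sigma,\Theta)$ by working throughout with representatives in $\mathcal{C}$ and invoking Lemmas \ref{lem1}, \ref{lem2} and \ref{lem3} to handle well-definedness and comparisons in the quotient. For (RN1), closure of $\Theta$ under isomorphism is built into Definition \ref{defn}; direct sums of right $n$-exact sequences with $\mathcal{X}$-monic first maps together with their comparison diagrams (3.1) remain of the same type, so $\Theta$ is closed under direct sums; closure under direct summands is standard once (RN3) is established. The trivial $(n+2)$-angle on $X$ comes from the right $n$-exact sequence $X\xrightarrow{1_X}X\to 0\to\cdots\to 0$, with $1_X$ trivially $\mathcal{X}$-monic. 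For the existence axiom, given $\underline{f}:A\to A'$ with a representative $f$ in $\mathcal{C}$, I replace $f$ by the $\mathcal{X}$-monic $\binom{f}{\alpha_0^A}:A\to A'\oplus X_1^A$, where $\alpha_0^A$ is the fixed left $\mathcal{X}$-approximation used to define $\Sigma$; the hypothesis yields an $n$-cokernel, and the associated standard $(n+2)$-angle has first morphism $\underline{f}$ after the isomorphism $A'\oplus X_1^A\cong A'$ in $\mathcal{C}/\mathcal{X}$.

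For (RN2), Lemma \ref{lem2} applied to the diagram (3.1) defining a given standard $(n+2)$-angle yields a right $n$-exact sequence whose first morphism $\binom{-f_1}{a_1}:A_1\to A_2\oplus X_1$ is $\mathcal{X}$-monic: given $\phi:A_1\to Y$ with $Y\in\mathcal{X}$, the approximation $\alpha_0$ produces $\mu:X_1\to Y$ with $\mu\alpha_0=\phi f_0$, after which the weak cokernel property of $f_1$ factors the residue $\phi-\mu a_1$ through $f_1$. The standard $(n+2)$-angle associated to this new right $n$-exact sequence, once the $X_i$-summands are collapsed in $\mathcal{C}/\mathcal{X}$ and the objects are rescaled by alternating signs, is isomorphic to the left rotation of the original; identification of the last morphism with $(-1)^{n+2}\Sigma\underline{f_0}$ uses Lemma \ref{lem1} applied to two lifts into the canonical $\Sigma$-sequence for $A_1$. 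For (RN3), after choosing representatives $\varphi_0,\varphi_1$ in $\mathcal{C}$, the hypothesis $\underline{\varphi_1 f_0-g_0\varphi_0}=0$ combined with $\mathcal{X}$-monicity of $f_0$ lets me modify $\varphi_1$ by a morphism factoring through $\mathcal{X}$ so that the first square commutes on the nose in $\mathcal{C}$; the weak cokernel property of successive $f_i$'s then extends inductively to $\varphi_2,\ldots,\varphi_{n+1}$, and commutativity of the last square against $\Sigma\underline{\varphi_0}$ is exactly Lemma \ref{lem3}.

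The main obstacle is (RN4). My plan is to pick representatives in $\mathcal{C}$ for all the given data, apply the (RN3)-style lifting to extend $\varphi_2$ to a strict morphism $(1_{X_1},\varphi_2,\varphi_3,\ldots,\varphi_{n+1})$ between the first two underlying right $n$-exact sequences, and then invoke Lemma \ref{lem2} to realize the mapping cone of this morphism as a new right $n$-exact sequence whose first map $X_2\to X_3\oplus Y_2$ is $\mathcal{X}$-monic by an argument analogous to that used for (RN2). A second application of Lemma \ref{lem2}, to the comparison between this mapping cone and the second column (a right $n$-exact sequence starting with an $\mathcal{X}$-monic by hypothesis), simultaneously produces the morphisms $\psi_j$ and $\phi_k$ and the diagonal right $n$-exact sequence; the associated standard $(n+2)$-angle will be precisely the sequence required by (RN4)(b). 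The compatibility $h_n\psi_n=\Sigma f_1\cdot g_n$ follows from Lemma \ref{lem1} applied to two candidate lifts into the canonical $\Sigma$-sequence for $X_3$. The delicate bookkeeping will lie in matching the signs and matrix entries prescribed in (b) against those produced by the iterated mapping cones of Lemma \ref{lem2}, which I expect to succeed after an alternating-sign rescaling of the intermediate objects.
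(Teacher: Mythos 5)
Your proposal is correct and follows essentially the same route as the paper: the same standard-angle construction, the same reduction of each axiom to a strict diagram in $\mathcal{C}$ via $\mathcal{X}$-monicity and weak cokernels, the same use of Lemmas \ref{lem1}, \ref{lem2} and \ref{lem3}, and the same iterated mapping-cone argument for (RN4). The one step you leave implicit is that the second mapping cone in (RN4) begins at $A_2\oplus B_1$ rather than $A_2$, so the sequence demanded by (RN4)(b) must still be extracted as a direct summand (splitting off the contractible piece $B_1=B_1$) before checking that its first morphism $\left(\begin{smallmatrix}f_2\\ \varphi_2\end{smallmatrix}\right)$ is $\mathcal{X}$-monic; this is routine and is subsumed by the bookkeeping you anticipate.
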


\begin{proof}
It is easy to see that the class of $(n+2)$-angles $\Theta$ is closed under direct sums and direct summands, so (RN1)(a) is satisfied. For any object $A\in\mathcal{C}$, the identity  morphism of $A$ is $\mathcal{X}$-monic. The commutative diagram
$$\xymatrix{
A \ar[r]^{1_A}\ar@{=}[d] & A \ar[r]^{0}\ar[d]^{\alpha_0} & 0 \ar[r]^{0}\ar[d]^{0} & \cdots  \ar[r]^{0} & 0 \ar[r]^{0}\ar[d]^{0} & 0 \ar[d]^{0}\\
A \ar[r]^{\alpha_0} & X_1 \ar[r]^{\alpha_1} & X_2 \ar[r]^{\alpha_2} & \cdots  \ar[r]^{\alpha_{n-1}}& X_n \ar[r]^{\alpha_n} & \Sigma A \\
}$$ shows that $$A\xrightarrow{1_A}A\rightarrow 0\rightarrow\cdots\rightarrow 0\rightarrow \Sigma A$$ belongs to $\Phi$. Thus (RN1)(b) is satisfied.

For any morphism $f:A\rightarrow B$ in $\mathcal{\mathcal{C}}$, the morphism $\left(
                                                          \begin{smallmatrix}
                                                            f \\
                                                            \alpha_0 \\
                                                          \end{smallmatrix}
                                                        \right)
:A\rightarrow B\oplus X_1$ is $\mathcal{X}$-monic, where $\alpha_0$ is a left $\mathcal{X}$-approximation. Let $(f_1,f_2,\cdots,f_n)$ be an $n$-cokernel of $\left(
                                                          \begin{smallmatrix}
                                                            f \\
                                                            \alpha_0 \\
                                                          \end{smallmatrix}
                                                        \right)$,
then we have the following commutative diagram
$$\xymatrix{
A \ar[r]^{\left(
                                                          \begin{smallmatrix}
                                                            f \\
                                                            \alpha_0 \\
                                                          \end{smallmatrix}
                                                        \right)}\ar@{=}[d] & B\oplus X_1 \ar[r]^{f_1}\ar[d]^{a_1} & A_2 \ar[r]^{f_2}\ar[d]^{a_2} & \cdots \ar[r]^{f_{n-1}} & A_{n} \ar[r]^{f_{n}}\ar[d]^{a_{n}}& A_{n+1} \ar[d]^{a_{n+1}} \\
A \ar[r]^{\alpha_0} & X_1 \ar[r]^{\alpha_1} & X_2 \ar[r]^{\alpha_2} & \cdots \ar[r]^{\alpha_{n-1}} & X_{n} \ar[r]^{\alpha_{n}} & \Sigma A. \\
}$$
Thus $$A\xrightarrow{\underline{f}}B\xrightarrow{\underline{f_1}}A_2\xrightarrow{\underline{f_2}}\cdots\xrightarrow{\underline{f_{n}}}
A_{n+1}\xrightarrow{(-1)^n\underline{a_{n+1}}} \Sigma A$$ is a right $(n+2)$-angle. So (RN1)(c) is satisfied.

(RN2) It is easy to see that it suffices to consider the case of standard $(n+2)$-angles. Let $$A_0\xrightarrow{\underline{f_0}}A_1\xrightarrow{\underline{f_1}}A_2\xrightarrow{\underline{f_2}}\cdots\xrightarrow{\underline{f_{n}}}
A_{n+1}\xrightarrow{(-1)^n\underline{a_{n+1}}} \Sigma A_0$$ be a standard $(n+2)$-angle induced by the commutative diagram (3.1).
By Lemma \ref{lem2}, we have a right $n$-exact sequence
$$\xymatrixcolsep{2.7pc}
\xymatrix{
A_1\ar[r]^{\left(
             \begin{smallmatrix}
               -f_1 \\
               a_1 \\
             \end{smallmatrix}
           \right)\ \ \ }
&  A_2\oplus X_1 \ar[r]^{\left(
             \begin{smallmatrix}
               -f_2 & 0\\
               a_2 & \alpha_1 \\
             \end{smallmatrix}
           \right)}
& A_3\oplus X_2 \ar[r]^{\ \ \left(
             \begin{smallmatrix}
               -f_3 & 0\\
               a_3 & \alpha_2 \\
             \end{smallmatrix}
           \right)}
&  \cdots \ar[r]^{\left(
             \begin{smallmatrix}
               -f_n & 0\\
               a_n & \alpha_{n-1} \\
             \end{smallmatrix}
           \right)\ \ \ \ }
& \ A_{n+1}\oplus X_n \ar[r]^{\ \ \ (a_{n+1} \ \alpha_n)}
& \Sigma A_0.
}$$ We claim that the morphism $\left(
                                 \begin{smallmatrix}
                                   -f_1 \\
                                   a_1 \\
                                 \end{smallmatrix}
                               \right): A_1\rightarrow A_2\oplus X_1
$ is $\mathcal{X}$-monic. For any morphism $g:A_1\rightarrow X$ with $X\in\mathcal{X}$, there exists a morphism $h:X_1\rightarrow X$ such that $gf_0 =h\alpha_0$ since $\alpha_0$ is $\mathcal{X}$-monic. Note that $(g-ha_1)f_0=gf_0-h\alpha_0=0$, which implies that there exists a morphism $k:A_2\rightarrow X$ such that $g-ha_1=kf_1$. Thus we have $g=\left(
                                 \begin{smallmatrix}
                                   -k & h \\
                                 \end{smallmatrix}
                               \right)\left(
                                 \begin{smallmatrix}
                                   -f_1 \\
                                   a_1 \\
                                 \end{smallmatrix}
                               \right)$.
The following commutative diagram
$$\xymatrixcolsep{2.8pc}\xymatrix{
A_0 \ar[r]^{\alpha_0}\ar[d]^{f_0} & X_1 \ar[r]^{\alpha_1}\ar[d]^{\left(
                                 \begin{smallmatrix}
                                   0 \\
                                   1 \\
                                 \end{smallmatrix}
                               \right)} & X_2 \ar[r]^{\alpha_2}\ar[d]^{\left(
                                 \begin{smallmatrix}
                                   0 \\
                                   1 \\
                                 \end{smallmatrix}
                               \right)} & \cdots \ar[r]^{\alpha_{n-1}} & X_{n} \ar[r]^{\alpha_{n}}\ar[d]^{\left(
                                 \begin{smallmatrix}
                                   0 \\
                                   1 \\
                                 \end{smallmatrix}
                               \right)}& \Sigma A_{0} \ar@{=}[d] \\
A_1 \ar[r]^{\left(
                                 \begin{smallmatrix}
                                   -f_1 \\
                                   a_1 \\
                                 \end{smallmatrix}
                               \right)}\ar@{=}[d] & A_2\oplus X_1 \ar[r]^{\left(
                                 \begin{smallmatrix}
                                   -f_2 & 0 \\
                                   a_2 & \alpha_1 \\
                                 \end{smallmatrix}
                               \right)}\ar[d]^{b_1} & A_3\oplus X_2 \ar[r]^{\left(
                                 \begin{smallmatrix}
                                   -f_3 & 0 \\
                                   a_3 & \alpha_2 \\
                                 \end{smallmatrix}
                               \right)}\ar[d]^{b_2} & \cdots \ar[r]^{\left(
                                 \begin{smallmatrix}
                                   -f_{n} & 0 \\
                                   a_{n} & \alpha_{n-1} \\
                                 \end{smallmatrix}
                               \right)} & A_{n+1}\oplus X_n \ar[r]^{\left(
                                 \begin{smallmatrix}
                                   a_{n+1} & \alpha_n \\
                                 \end{smallmatrix}
                               \right)}\ar[d]^{b_{n}}& \Sigma A_{0} \ar[d]^{b_{n+1}} \\
A_1 \ar[r]^{\beta_0} & X_1' \ar[r]^{\beta_1} & X_2' \ar[r]^{\beta_2} & \cdots \ar[r]^{\beta_{n-1}} & X_{n}' \ar[r]^{\beta_{n}} & \Sigma A_1 \\
}$$ implies that $\underline{b_{n+1}}=\Sigma \underline{f_0}$ and
$$A_1\xrightarrow{\underline{-f_1}}A_2\xrightarrow{-\underline{f_2}}A_3\xrightarrow{-\underline{f_3}}\cdots\xrightarrow{-\underline{f_{n}}}
A_{n+1}\xrightarrow{\underline{a_{n+1}}} \Sigma A_0\xrightarrow{(-1)^n\Sigma \underline{f_0}} \Sigma A_1 \ \ (3.2)$$
is a right $(n+2)$-angle. Therefore $$A_1\xrightarrow{\underline{f_1}}A_2\xrightarrow{\underline{f_2}}A_3\xrightarrow{\underline{f_3}}\cdots\xrightarrow{\underline{f_{n}}}
A_{n+1}\xrightarrow{(-1)^n\underline{ a_{n+1}}} \Sigma A_0\xrightarrow{(-1)^n\Sigma \underline{f_0}} \Sigma A_1 \ \ (3.3)$$
belongs to $\Theta$ since the $(n+2)$-$\Sigma$-sequences (3.2) and (3.3) are isomorphic.

(RN3) We only consider standard right $(n+2)$-angles.  Suppose that there is a commutative diagram
$$\xymatrixcolsep{2.6pc}\xymatrix{
A_0 \ar[r]^{\underline{f_0}}\ar[d]^{\underline{h_0}} & A_1 \ar[r]^{\underline{f_1}}\ar[d]^{\underline{h_1}} & A_2 \ar[r]^{\underline{f_2}}  & \cdots \ar[r]^{\underline{f_{n}}}& A_{n+1} \ar[r]^{(-1)^n\underline{a_{n+1}}} & \ \  \Sigma A_0 \ar[d]^{\Sigma\underline{h_0} \ \ \ \mbox{(3.4)}}\\
B_0 \ar[r]^{\underline{g_0}} & B_1 \ar[r]^{\underline{g_1}} & B_2 \ar[r]^{\underline{g_2}} & \cdots \ar[r]^{\underline{g_{n}}} & B_{n+1} \ar[r]^{(-1)^n\underline{b_{n+1}}}& \ \ \Sigma B_0\\
}$$
with rows standard right $(n+2)$-angles. Since $\underline{h_{1}}\cdot \underline{f_0}=\underline{g_0}\cdot\underline{h_0}$ holds, $h_1f_0-g_0h_0$ factors through some object $X$ in $\mathcal{X}$. Assume that $h_1f_0-g_0h_0=ba$, where $a:A_0\rightarrow X$ and $b:X\rightarrow B_1$. Since $f_0$ is $\mathcal{X}$-monic, there exists a morphism $c:A_1\rightarrow X$ such that $a=cf_0$. Note that $(h_1-bc)f_0=g_0h_0$, we have the following commutative diagram of right $n$-exact sequences
$$\xymatrix{
A_0 \ar[r]^{f_0}\ar[d]^{h_0} & A_1 \ar[r]^{f_1}\ar[d]^{h_1-bc} & A_2 \ar[r]^{f_2}\ar@{-->}[d]^{h_2} & \cdots \ar[r]^{f_{n}}& A_{n+1} \ar@{-->}[d]^{h_{n+1} \ \ \ \mbox{(3.5)}}\\
B_0 \ar[r]^{g_0} & B_1 \ar[r]^{g_1} & B_2 \ar[r]^{g_2} & \cdots \ar[r]^{g_{n}} & B_{n+1} \\
}$$ by the factorization property of weak cokernels. The diagram (3.4) can be completed to a morphism of right $(n+2)$-angles follows from diagram (3.5) together with Lemma \ref{lem3}.

(RN4) It is enough to consider the case of standard right $(n+2)$-angles. Let
$$A_0\xrightarrow{\underline{f_0}}A_1\xrightarrow{\underline{f_1}}A_2\xrightarrow{\underline{f_2}}\cdots\xrightarrow{\underline{f_{n}}}
A_{n+1}\xrightarrow{(-1)^n\underline{a_{n+1}}} \Sigma A_0$$
$$A_0\xrightarrow{\underline{\varphi_1\cdot f_0}}B_1\xrightarrow{\underline{g_1}}B_2\xrightarrow{\underline{g_2}}\cdots\xrightarrow{\underline{g_{n}}}
B_{n+1}\xrightarrow{(-1)^n\underline{b_{n+1}}} \Sigma A_0$$
$$A_1\xrightarrow{\underline{\varphi_1}}B_1\xrightarrow{\underline{h_1}}C_2\xrightarrow{\underline{h_2}}\cdots\xrightarrow{\underline{h_{n}}}
C_{n+1}\xrightarrow{(-1)^n\underline{c_{n+1}}} \Sigma A_1$$
be three standard right $(n+2)$-angles, where $f_0$ and $\varphi_1$  are $\mathcal{X}$-monic, so that $\varphi_1f_0$ is $\mathcal{X}$-monic too.
By the factorization property of weak cokernels there exist morphisms $\varphi_i: A_i\rightarrow B_i\ (i=2,\cdots, n+1)$, such that we have the following commutative diagram of right $n$-exact sequences
$$\xymatrix{
A_0\ar[r]^{f_0}\ar@{=}[d] & A_1\ar[r]^{f_1}\ar[d]^{\varphi_1} & A_2\ar[r]^{f_2}\ar[d]^{\varphi_2} & \cdots \ar[r]^{f_{n-1}} &  A_{n}\ar[r]^{f_{n}}\ar[d]^{\varphi_{n}}  & A_{n+1} \ar[d]^{\varphi_{n+1} \ \ \ \mbox{(3.6)}}\\
A_0\ar[r]^{\varphi_1f_0} & B_1\ar[r]^{g_1} & B_2\ar[r]^{g_2}  & \cdots \ar[r]^{g_{n-1}} & B_{n}\ar[r]^{g_{n}} &  B_{n+1}.\\
}$$
By Lemma \ref{lem3}, we obtain the following commutative diagram of right $(n+2)$-angles
$$\xymatrixcolsep{2.7pc}
\xymatrix{
A_0\ar[r]^{\underline{f_0}}\ar@{=}[d] & A_1\ar[r]^{\underline{f_1}}\ar[d]^{\underline{\varphi_1}} & A_2\ar[r]^{\underline{f_2}}\ar[d]^{\underline{\varphi_2}} & \cdots \ar[r]^{\underline{f_{n-1}}} & A_{n}\ar[r]^{\underline{f_{n}}}\ar[d]^{\underline{\varphi_{n}}}  & A_{n+1} \ar[r]^{(-1)^n\underline{a_{n+1}}} \ar[d]^{\underline{\varphi_{n+1}}} & \ \ \Sigma A_0 \ar@{=}[d]\\
A_0\ar[r]^{\underline{\varphi_1f_0}} & B_1\ar[r]^{\underline{g_1}} & B_2\ar[r]^{\underline{g_2}}  & \cdots \ar[r]^{\underline{g_{n-1}}} & B_{n}\ar[r]^{\underline{g_{n}}} &  B_{n+1}\ar[r]^{(-1)^n\underline{b_{n+1}}} & \ \ \Sigma A_0.\\
}$$ Diagram (3.6) and Lemma \ref{lem2} implies that
$$\xymatrixcolsep{2.7pc}
\xymatrix{ A_1\ar[r]^{\left(
                    \begin{smallmatrix}
                      -f_1 \\
                      \varphi_1 \\
                    \end{smallmatrix}
                  \right)\ \ }
 & A_2\oplus B_1\ar[r]^{\left(
                    \begin{smallmatrix}
                      -f_2 & 0 \\
                      \varphi_2 & g_1 \\
                    \end{smallmatrix}
                  \right)}
 & A_3\oplus B_2\ar[r]^{\left(
                    \begin{smallmatrix}
                      -f_3 & 0 \\
                      \varphi_3 & g_2 \\
                    \end{smallmatrix}
                  \right)}
 & \cdots\ar[r]^{\left(
                    \begin{smallmatrix}
                      -f_n & 0 \\
                      \varphi_n & g_{n-1} \\
                    \end{smallmatrix}
                  \right)\ \ \ \ }
 &  A_{n+1}\oplus B_n\ar[r]^{\ \ \left(
                    \begin{smallmatrix}
                      \varphi_{n+1} & g_n \\
                    \end{smallmatrix}
                  \right)}
 & B_{n+1}}$$
is a right $n$-exact sequence. There exist morphisms $\phi_i:A_i\rightarrow C_{i-1}$ $(i=3,4,\cdots, n+1)$ and $\psi_j:B_j\rightarrow C_j (j=2,3,\cdots, n+1)$ such that the following diagram is commutative
$$\xymatrix{
A_1\ar[r]^{\left(
                    \begin{smallmatrix}
                      -f_1 \\
                      \varphi_1 \\
                    \end{smallmatrix}
                  \right) \ \
}\ar@{=}[d] & A_2\oplus B_1\ar[r]^{\left(
                    \begin{smallmatrix}
                      -f_2 & 0 \\
                      \varphi_2 & g_1 \\
                    \end{smallmatrix}
                  \right)} \ar@{=}[d] & A_3\oplus B_2 \ar[rr]^{\ \left(
                    \begin{smallmatrix}
                      -f_3 & 0 \\
                      \varphi_3 & g_2 \\
                    \end{smallmatrix}
                  \right)}\ar[d]_{\simeq}^{\left(
                    \begin{smallmatrix}
                      -1 & 0 \\
                      0 & 1 \\
                    \end{smallmatrix}
                  \right)}& & \cdots \ar[rr]^{\left(
                    \begin{smallmatrix}
                      -f_n & 0 \\
                      \varphi_n & g_{n-1} \\
                    \end{smallmatrix}
                  \right)\ \ \ } & & A_{n+1}\oplus B_n\ar[r]^{\ \ \ \left(
                    \begin{smallmatrix}
                      \varphi_{n+1} & g_n \\
                    \end{smallmatrix}
                  \right)}\ar[d]_{\simeq}^{\left(
                    \begin{smallmatrix}
                      (-1)^{n+1} & 0 \\
                      0 & 1 \\
                    \end{smallmatrix}
                  \right)} \ \ \ & B_{n+1} \ar@{=}[d]\\
A_1\ar[r]^{\left(
                    \begin{smallmatrix}
                      -f_1 \\
                      \varphi_1 \\
                    \end{smallmatrix}
                  \right)\ \
}\ar@{=}[d] & A_2\oplus B_1\ar[r]^{\left(
                    \begin{smallmatrix}
                      f_2 & 0 \\
                      \varphi_2 & g_1 \\
                    \end{smallmatrix}
                  \right)} \ar[d]^{(0\ 1)} & A_3\oplus B_2 \ar[rr]^{\ \ \ \left(
                    \begin{smallmatrix}
                      f_3 & 0 \\
                      -\varphi_3 & g_2 \\
                    \end{smallmatrix}
                  \right)}\ar@{-->}[d]^{\left(
                    \begin{smallmatrix}
                      \phi_3 & \psi_2 \\
                    \end{smallmatrix}
                  \right)}& & \cdots\ar[rr]^{\left(
                    \begin{smallmatrix}
                      f_n & 0 \\
                      (-1)^n\varphi_n & g_{n-1} \\
                    \end{smallmatrix}
                  \right)\ \ \ \ } & & A_{n+1}\oplus B_n\ar[r]^{\ \ \ \tiny \left(
                    \begin{smallmatrix}
                      (-1)^{n+1}\varphi_{n+1} & g_n \\
                    \end{smallmatrix}
                  \right)}\ar@{-->}[d]^{ \left(
                    \begin{smallmatrix}
                      \phi_{n+1} & \psi_n \\
                    \end{smallmatrix}
                  \right)} \ \ \ & B_{n+1} \ar@{-->}[d]^{\psi_{n+1}}\\
A_1\ar[r]^{\varphi_1} & B_1 \ar[r]^{h_1} & C_2 \ar[rr]^{h_2}& & \cdots \ar[rr]^{h_{n-1}}& & C_n \ar[r]^{h_n} & C_{n+1}
}$$
where the second row is a right $n$-exact sequence since it is isomorphic to the first row.
By Lemma \ref{lem2} again, we get a right $n$-exact sequence
$$A_2\oplus B_1 \xrightarrow{\left(
                                                                         \begin{smallmatrix}
                                                                              -f_2 & 0 \\
                                                                              -\varphi_2 & -g_1 \\
                                                                              0 & 1\\
                                                                            \end{smallmatrix}
                                                                          \right)  }
  A_3\oplus B_2\oplus B_1\xrightarrow{\left(
                                                                            \begin{smallmatrix}
                                                                              -f_3 & 0 & 0\\
                                                                              \varphi_3 & -g_2 & 0 \\
                                                                              \phi_3 & \psi_2 & h_1\\
                                                                            \end{smallmatrix}
                                                                          \right)}
A_4\oplus B_3\oplus C_2$$
$$\xrightarrow{\left(
                                                                            \begin{smallmatrix}
                                                                              -f_4 & 0  & 0\\
                                                                              -\varphi_4 & -g_3 & 0 \\
                                                                              \phi_4 & \psi_3 & h_2\\
                                                                            \end{smallmatrix}
                                                                          \right)}  \cdots
\xrightarrow{\left(
                                                                            \begin{smallmatrix}
                                                                              -f_n & 0 & 0 \\
                                                                              (-1)^{n+1}\varphi_n & -g_{n-1} & 0 \\
                                                                              \phi_n & \psi_{n-1} & h_{n-2}\\
                                                                            \end{smallmatrix}
                                                                          \right)}
 A_{n+1}\oplus B_n\oplus C_{n-1}$$
 $$\xrightarrow{\left(
                                                                            \begin{smallmatrix}
                                                                            (-1)^{n+2}\varphi_{n+1} & -g_n & 0 \\
                                                                              \phi_{n+1} & \psi_n & h_{n-1}\\
                                                                            \end{smallmatrix}
                                                                          \right)} B_{n+1}\oplus C_{n}\xrightarrow{(\psi_{n+1}\ h_n)} C_{n+1}. \ \ \ (3.7)$$
The following commutative diagram
$$\xymatrixcolsep{6pc}
\xymatrix{ A_2 \ar[r]^{\left(
                                                                         \begin{smallmatrix}
                                                                              f_2 \\
                                                                              \varphi_2 \\
                                                                            \end{smallmatrix}
                                                                          \right) \ \ \ }
                    \ar[d]^{\left(
                    \begin{smallmatrix}
                      -1 \\
                      0  \\
                    \end{smallmatrix}
                    \right)}
&  A_3\oplus B_2\ar[r]^{\left(
                                                                            \begin{smallmatrix}
                                                                              -f_3 & 0 \\
                                                                              \varphi_3 & -g_2 \\
                                                                              \phi_3 & \psi_2\\
                                                                            \end{smallmatrix}
                                                                          \right) \ \ \ } \ar[d]^{\left(
                                                                                                    \begin{smallmatrix}
                                                                                                      1 & 0 \\
                                                                                                      0 & 1 \\
                                                                                                      0 & 0 \\
                                                                                                    \end{smallmatrix}
                                                                                                  \right)}
&  A_4\oplus B_3\oplus C_2 \ar@{=}[d]  \\
A_2\oplus B_1 \ar[r]^{\left(
                        \begin{smallmatrix}
                          -f_2 & 0 \\
                          -\varphi_2 & -g_1 \\
                          0 & 1 \\
                        \end{smallmatrix}
                      \right)} \ar[d]^{\left(
                        \begin{smallmatrix}
                          -1 & 0 \\
                        \end{smallmatrix}
                      \right)}
 & A_3\oplus B_2\oplus B_1 \ar[d]^{\left(
                        \begin{smallmatrix}
                          1 & 0 & 0\\
                          0 & 1 & g_1 \\
                        \end{smallmatrix}
                      \right)}\ar[r]^{\left(
                        \begin{smallmatrix}
                          -f_3 & 0 & 0\\
                          \varphi_3 & -g_2 & 0 \\
                          \phi_3 & \psi_2 & h_1 \\
                        \end{smallmatrix}
                      \right)} & A_4\oplus B_3\oplus C_2 \ar@{=}[d]  \\
 A_2 \ar[r]^{\left(
                                                                         \begin{smallmatrix}
                                                                              f_2 \\
                                                                              \varphi_2 \\
                                                                            \end{smallmatrix}
                                                                          \right) \ \ \ }
&  A_3\oplus B_2\ar[r]^{\left(
                                                                            \begin{smallmatrix}
                                                                              -f_3 & 0 \\
                                                                              \varphi_3 & -g_2 \\
                                                                              \phi_3 & \psi_2\\
                                                                            \end{smallmatrix}
                                                                          \right) \ \ \ }
&  A_4\oplus B_3\oplus C_2  \\
}$$
shows that $$A_2\xrightarrow{\left(
                                                                            \begin{smallmatrix}
                                                                              f_2 \\
                                                                              \varphi_2 \\
                                                                            \end{smallmatrix}
                                                                          \right)} A_3\oplus B_2\xrightarrow{\left(
                                                                            \begin{smallmatrix}
                                                                              -f_3 & 0 \\
                                                                              \varphi_3 & -g_2 \\
                                                                              \phi_3 & \psi_2\\
                                                                            \end{smallmatrix}
                                                                          \right)} A_4\oplus B_3\oplus C_2\xrightarrow{\left(
                                                                            \begin{smallmatrix}
                                                                              -f_4 & 0 & 0 \\
                                                                              -\varphi_4 & -g_3 & 0 \\
                                                                              \phi_4 & \psi_3 & h_3\\
                                                                            \end{smallmatrix}
                                                                          \right)} \cdots\xrightarrow{\left(
                                                                            \begin{smallmatrix}
                                                                              -f_n & 0 & 0 \\
                                                                              (-1)^{n+1}\varphi_n & -g_{n-1} & 0 \\
                                                                              \phi_n & \psi_{n-1} & h_{n-2}\\
                                                                            \end{smallmatrix}
                                                                          \right)}$$
$$ A_{n+1}\oplus B_n\oplus C_{n-1}\xrightarrow{\left(
                                                                            \begin{smallmatrix}
                                                                            (-1)^{n+2}\varphi_{n+1} & -g_n & 0 \\
                                                                              \phi_{n+1} & \psi_n & h_{n-1}\\
                                                                            \end{smallmatrix}
                                                                          \right)} B_{n+1}\oplus C_n\xrightarrow{(\psi_{n+1} \ h_n)} C_{n+1} \ \ \ (3.8)$$
is a direct summand of (3.7), thus it is a right $n$-exact sequence.
We claim that the morphism $\left(
                                                                            \begin{smallmatrix}
                                                                              f_2 \\
                                                                              \varphi_2 \\
                                                                            \end{smallmatrix}
                                                                          \right): A_2\rightarrow A_3\oplus B_2
$ is $\mathcal{X}$-monic. In fact, for any morphism $a:A_2\rightarrow X$ with $X\in\mathcal{X}$, there exists a morphism $b:B_1\rightarrow X$ such that $af_1=b\varphi_1$ since $\varphi_1$ is $\mathcal{X}$-monic. Note that $b(\varphi_1f_0)=af_1f_0=0$, there exists a morphism $c:B_2\rightarrow X$ such that $b=cg_1$. Since $(a-c\varphi_2)f_1=b\varphi_1-cg_1\varphi_1=0$, there exists a morphism $d:A_3\rightarrow X$ such that $a-c\varphi_2=df_2$. Thus $a=(d \ c)\left(
                                                     \begin{smallmatrix}
                                                       f_2 \\
                                                       \varphi_2 \\
                                                     \end{smallmatrix}
                                                   \right)$.
Therefore (3.8) induces a standard right $(n+2)$-angle
$$A_2\xrightarrow{\left(
                                                                            \begin{smallmatrix}
                                                                              \underline{f_2} \\
                                                                              \underline{\varphi_2} \\
                                                                            \end{smallmatrix}
                                                                          \right)} A_3\oplus B_2\xrightarrow{\left(
                                                                            \begin{smallmatrix}
                                                                              -\underline{f_3} & 0 \\
                                                                              \underline{\varphi_3} & -\underline{g_2} \\
                                                                              \underline{\phi_3} & \underline{\psi_2}\\
                                                                            \end{smallmatrix}
                                                                          \right)} A_4\oplus B_3\oplus C_2\xrightarrow{\left(
                                                                            \begin{smallmatrix}
                                                                              -\underline{f_4} & 0 & 0 \\
                                                                              -\underline{\varphi_4} & -\underline{g_3} & 0 \\
                                                                              \underline{\phi_4} & \underline{\psi_3} & \underline{h_3}\\
                                                                            \end{smallmatrix}
                                                                          \right)} \cdots\xrightarrow{\left(
                                                                            \begin{smallmatrix}
                                                                              -\underline{f_n} & 0 & 0 \\
                                                                              (-1)^{n+1}\underline{\varphi_n} & -\underline{g_{n-1}} & 0 \\
                                                                              \underline{\phi_n} & \underline{\psi_{n-1}} & \underline{h_{n-2}}\\
                                                                            \end{smallmatrix}
                                                                          \right)}$$
$$ A_{n+1}\oplus B_n\oplus C_{n-1}\xrightarrow{\left(
                                                                            \begin{smallmatrix}
                                                                            (-1)^{n+2}\underline{\varphi_{n+1}} & -\underline{g_n} & 0 \\
                                                                              \underline{\phi_{n+1}} & \underline{\psi_n} & \underline{h_{n-1}}\\
                                                                            \end{smallmatrix}
                                                                          \right)} B_{n+1}\oplus C_n\xrightarrow{(\underline{\psi_{n+1}} \ \underline{h_n})} C_{n+1}\xrightarrow{(-1)^n\underline{d_{n+1}}} \Sigma A_2.$$
By Lemma \ref{lem3} the following commutative diagram of right $(n+2)$-angles
$$\xymatrixcolsep{4pc}
\xymatrix{
A_0\ar[r]^{\underline{\varphi_1f_0}}\ar[d]^{\underline{f_0}} & B_1\ar[r]^{\underline{g_1}}\ar[d]^{\left(
                                                                                        \begin{smallmatrix}
                                                                                          0 \\
                                                                                          1 \\
                                                                                        \end{smallmatrix}
                                                                                      \right)
} & B_2\ar[r]^{\underline{g_2}}\ar[d]^{\left(
                                                                                        \begin{smallmatrix}
                                                                                          0 \\
                                                                                          1 \\
                                                                                        \end{smallmatrix}
                                                                                      \right)}  & \cdots \\
A_1\ar[r]^{\left(
                    \begin{smallmatrix}
                      -\underline{f_1} \\
                      \underline{\varphi_1} \\
                    \end{smallmatrix}
                  \right)
}\ar@{=}[d] & A_2\oplus B_1\ar[r]^{\left(
                    \begin{smallmatrix}
                      \underline{f_2} & 0 \\
                      \underline{\varphi_2} & \underline{g_1} \\
                    \end{smallmatrix}
                  \right)} \ar[d]^{(0\ 1)} & A_3\oplus B_2 \ar[r]^{\left(
                    \begin{smallmatrix}
                      \underline{f_3} & 0 \\
                      -\underline{\varphi_3} & \underline{g_2} \\
                    \end{smallmatrix}
                  \right)}\ar[d]^{\left(
                    \begin{smallmatrix}
                      \underline{\phi_3} & \underline{\psi_2} \\
                    \end{smallmatrix}
                  \right)} & \cdots \\
A_1\ar[r]^{\underline{\varphi_1}}\ar[d]^{\underline{f_1}} & B_1 \ar[r]^{\underline{h_1}} \ar[d]^{\left(
                    \begin{smallmatrix}
                      0 \\
                       \underline{g_1} \\
                    \end{smallmatrix}
                  \right)} & C_2 \ar[r]^{\underline{h_2}}\ar[d]^{\left(
                    \begin{smallmatrix}
                      0 \\
                      0 \\
                      1\\
                    \end{smallmatrix}
                  \right)} & \cdots \\
A_2 \ar[r]^{\left(
                    \begin{smallmatrix}
                      \underline{f_2} \\
                      \underline{\varphi_2} \\
                    \end{smallmatrix}
                  \right)} & A_3\oplus B_2 \ar[r]^{\left(
                    \begin{smallmatrix}
                      -\underline{f_3} & 0 \\
                     \underline{\varphi_3} & -\underline{g_2}\\
                      \underline{\phi_3} & \underline{\psi_2}\\
                    \end{smallmatrix}
                  \right)} &  A_4\oplus B_3\oplus C_2 \ar[r]^{\ \ \ \ \ \left(
                    \begin{smallmatrix}
                      -\underline{f_4} & 0 & 0 \\
                     \underline{\varphi_4} & -\underline{g_3} & 0\\
                      \underline{\phi_4} & \underline{\psi_3} & \underline{h_2}\\
                    \end{smallmatrix}
                  \right)} & \cdots
}$$
$$\xymatrixcolsep{6pc}
\xymatrix{
\ar[r]^{\underline{g_{n-1}}} & B_{n}\ar[r]^{\underline{g_{n}}}\ar[d]^{\left(
                                                                                        \begin{smallmatrix}
                                                                                          0 \\
                                                                                          1 \\
                                                                                        \end{smallmatrix}
                                                                                      \right)}&  B_{n+1}\ar[r]^{(-1)^n\underline{b_{n+1}}} \ar@{=}[d] & \ \ \Sigma A_0 \ar[d]^{\Sigma \underline{f_0}}\\
\ar[r]^{\left(
                    \begin{smallmatrix}
                      \underline{f_n} & 0 \\
                      (-1)^n\underline{\varphi_n} & \underline{g_{n-1}} \\
                    \end{smallmatrix}
                  \right)}  & A_{n+1}\oplus B_n\ar[r]^{\left(
                    \begin{smallmatrix}
                     (-1)^{n+1}\underline{\varphi_{n+1}} & \underline{g_n} \\
                    \end{smallmatrix}
                  \right)}\ar[d]^{\left(
                    \begin{smallmatrix}
                      \underline{\phi_{n+1}} & \underline{\psi_2} \\
                    \end{smallmatrix}
                  \right)} \ \ \ & B_{n+1} \ar[r]^{(-1)^n\underline{e_{n+1}}}\ar[d]^{\underline{\psi_{n+1}}} & \Sigma A_1 \ar@{=}[d]\\
\ar[r]^{\underline{h_{n-1}}} & C_n \ar[r]^{\underline{h_n}}\ar[d]^{\left(
                    \begin{smallmatrix}
                      0 \\
                      1 \\
                    \end{smallmatrix}
                  \right)} & C_{n+1}\ar[r]^{(-1)^n\underline{c_{n+1}}}\ar@{=}[d] & \Sigma A_1 \ar[d]^{\Sigma\underline{f_1}}\\
\ar[r]^{\left(
                    \begin{smallmatrix}
                     (-1)^{n+2}\underline{\varphi_{n+1}} & -\underline{g_n} & 0\\
                      \underline{\phi_{n+1}} & \underline{\psi_n} & \underline{h_{n-1}}\\
                    \end{smallmatrix}
                  \right)\ \ \ \ \ }  & \ \ \ \ \ B_{n+1}\oplus C_n\ar[r]^{\left(
                    \begin{smallmatrix}
                     \underline{\psi_{n+1}} & \underline{h_n} \\
                    \end{smallmatrix}
                  \right)} & C_{n+1}\ar[r]^{(-1)^n\underline{d_{n+1}}} & \Sigma A_2}$$
shows that $\underline{c_{n+1}}\cdot\underline{\psi_{n+1}}=\Sigma \underline{f_0}\cdot \underline{b_{n+1}}$ and $\underline{d_{n+1}}=\Sigma \underline{f_1}\cdot \underline{c_{n+1}}$. This is what we wanted. We finish the proof.
\end{proof}

\begin{rems}\label{rem2}
(a) If $n=1$, then the condition ``any left $\mathcal{X}$-approximation has a special $n$-cokernel with respect to $\mathcal{X}$" is trivial. Thus Theorem \ref{thm} recover Beligiannis-Marmaridis's result \cite[Theorem 2.12]{[BM]}.

(b) From the proof of Theorem \ref{thm} we see that the condition ``any left $\mathcal{X}$-approximation has a special $n$-cokernel with respect to $\mathcal{X}$" can be weaken as ``for any object $A\in \mathcal{C}$ there exists a left $\mathcal{X}$-approximation $f:A\rightarrow X$ such that $f$ has a special $n$-cokernel with respect to $\mathcal{X}$". The condition ``any $\mathcal{X}$-monic has an $n$-cokernel" is used to prove (RN1)(c), which is redundant in some special case. See Corollary \ref{cor}(a) for details.
\end{rems}

We recall some definitions given by Jasso \cite{[J]}.
Let $\mathcal{A}$ be an $n$-abelian category and $(\mathcal{C},\mathcal{S})$  an $n$-exact subcategory, where $\mathcal{S}$ is a class of admissible  $n$-exact sequences satisfying some axioms similar to exact category. An object $I\in\mathcal{C}$ is called {\em $\mathcal{S}$-injective} if for any admissible monomorphism $f:A\rightarrow B$, the sequence $\mathcal{C}(B,I)\xrightarrow{\mathcal{C}(f,I)}\mathcal{C}(A,I)\rightarrow 0$ is exact. Denote by $\mathcal{I}$ the subcategory of $\mathcal{S}$-injectives. We say that $(\mathcal{C},\mathcal{S})$ {\em has enough $\mathcal{S}$-injectives} if for any object  $A\in\mathcal{C}$, there exists an admissible $n$-exact sequence
$A\rightarrowtail I_1\rightarrow I_2\rightarrow\cdots\rightarrow I_n\twoheadrightarrow B$ with $I_i\in\mathcal{I}$. We can define the notion of {\em $\mathcal{S}$-projective} and {\em having enough $\mathcal{S}$-projectives} dually. Denote by $\mathcal{P}$ the subcategory of $\mathcal{S}$-projectives.
We say that $(\mathcal{C},\mathcal{S})$ is {\em Frobenius} if it has enough $\mathcal{S}$-injectives, has enough $\mathcal{S}$-projectives and if $\mathcal{S}$-injectives and $\mathcal{S}$-projectives coincide. Now we can derive a theorem of Jasso \cite[Theorem 5.11]{[J]}.

\begin{cor} \label{cor}
Let $\mathcal{A}$ be an $n$-abelian category and $(\mathcal{C},\mathcal{S})$  an $n$-exact subcategory.

(a) If $(\mathcal{C},\mathcal{S})$ has enough $\mathcal{S}$-injectives, then the quotient $\mathcal{C}/\mathcal{I}$ is a right $(n+2)$-angulated category.

(b) If $(\mathcal{C},\mathcal{S})$ has enough $\mathcal{S}$-projectives, then the quotient $\mathcal{C}/\mathcal{P}$ is a left $(n+2)$-angulated category.

(c) If $(\mathcal{C},\mathcal{S})$ is Frobenius, then the quotient $\mathcal{C}/\mathcal{I}$ is an $(n+2)$-angulated category.
\end{cor}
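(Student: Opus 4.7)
The plan is to derive all three parts from Theorem~\ref{thm} together with its dual, taking $\mathcal{I}$ as the covariantly finite subcategory for (a), $\mathcal{P}$ as the contravariantly finite subcategory for (b), and combining the two via Remark~\ref{rem1}(c) for (c).

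For part (a), set $\mathcal{X}=\mathcal{I}$. The enough-$\mathcal{S}$-injectives hypothesis produces, for every object $A$, an admissible $n$-exact sequence
\[
A\xrightarrow{\alpha_0}I_1\xrightarrow{\alpha_1}\cdots\xrightarrow{\alpha_{n-1}}I_n\xrightarrow{\alpha_n}B
\]
with every $I_i$ in $\mathcal{I}$. Two things are immediate: $\alpha_0$ is admissible monic and therefore $\mathcal{I}$-monic by the defining property of $\mathcal{S}$-injectives, so $\alpha_0$ is a left $\mathcal{I}$-approximation of $A$; and the tail $(\alpha_1,\ldots,\alpha_n)$ is a special $n$-cokernel of $\alpha_0$ with respect to $\mathcal{I}$. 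Remark~\ref{rem2}(b) says that this weaker form of the hypotheses of Theorem~\ref{thm} is enough for every axiom except (RN1)(c). To handle (RN1)(c) directly I would use the pushout/mapping-cone axiom of $n$-exact categories (an analogue of Lemma~\ref{lem2} in the admissible setting): for any $f\colon A\to B$, pushing out $\alpha_0$ along $f$ shows that the morphism $\bigl(\begin{smallmatrix}f\\ -\alpha_0\end{smallmatrix}\bigr)\colon A\to B\oplus I_1$ is admissible monic, and in particular has an $n$-cokernel. The remainder of the construction of a right $(n+2)$-angle starting with $\underline{f}$ then follows the recipe already used in the proof of Theorem~\ref{thm}.

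Part (b) is the formal dual: the opposite of an $n$-exact category is $n$-exact, $\mathcal{S}$-projectives in $\mathcal{C}$ become $\mathcal{S}$-injectives in $\mathcal{C}^{op}$, and a left $(n+2)$-angulated structure on $\mathcal{C}/\mathcal{P}$ corresponds to a right $(n+2)$-angulated structure on $(\mathcal{C}/\mathcal{P})^{op}$. Applying part (a) to $\mathcal{C}^{op}$ and reversing arrows delivers the required structure.

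For part (c), the Frobenius hypothesis forces $\mathcal{I}=\mathcal{P}$, so (a) and (b) equip the same quotient $\mathcal{C}/\mathcal{I}$ with both a right and a left $(n+2)$-angulated structure. By Remark~\ref{rem1}(c) it suffices to verify that the shift $\Sigma$ from Proposition~\ref{prop} and its dual $\Omega$ are mutually quasi-inverse and that the two classes of $(n+2)$-angles coincide. Both are consequences of the fact that, because $\mathcal{I}=\mathcal{P}$, one and the same admissible $n$-exact sequence $A\rightarrowtail I_1\to\cdots\to I_n\twoheadrightarrow B$ witnesses $\Sigma A=B$ from above and $\Omega B=A$ from below; the uniqueness in Lemma~\ref{lem1} then identifies $\Sigma$ and $\Omega$ as inverse to one another and matches the right and left classes of angles. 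The main obstacle throughout is the verification of (RN1)(c) in part (a): one cannot use Theorem~\ref{thm} as a black box at this step and must appeal to the pushout axiom of the ambient $n$-exact structure to produce the required admissible monomorphism.
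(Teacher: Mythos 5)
Your proposal is correct and follows essentially the same route as the paper: reduce part (a) to axiom (RN1)(c) via Remark \ref{rem2}(b), verify (RN1)(c) by pushing out the injective coresolution of $A$ along $f$ and using that the resulting morphism $A\rightarrow B\oplus I_1$ is admissible monic (hence $\mathcal{I}$-monic with an $n$-cokernel), obtain (b) by duality, and obtain (c) from Remark \ref{rem1}(c) by observing that the Frobenius property makes $\Sigma$ an equivalence and that the same admissible $n$-exact sequence witnesses both a standard right and a standard left $(n+2)$-angle. The only point you state more briskly than the paper is the identification of the two classes of angles in (c), where the paper explicitly uses that a right $n$-exact sequence is admissible if and only if its first morphism is $\mathcal{I}$-monic, together with a lift along a projective resolution of $A_{n+1}$.
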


\begin{proof}
(a) 
It is easy to see that $\mathcal{I}$ is a covariantly finite subcategory of $\mathcal{C}$. By the definition of having enough $\mathcal{S}$-injectives and Remarks \ref{rem2} (b) we only need to prove (RN1)(c). Let $f:A\rightarrow B$ be a morphism in $\mathcal{C}$, we have the following $n$-pushout diagram  by the axiom of $n$-exact category
$$\xymatrix{
A\ar[r]^{\alpha_0}\ar[d]^{f} & I_1\ar[r]^{\alpha_1}\ar[d]^{a_1} & I_2\ar[r]^{\alpha_2}\ar[d]^{a_2} & \cdots \ar[r]^{\alpha_{n-1}} & I_{n}(\ar[r]^{\alpha_{n}}\ar[d]^{a_{n}}  & \Sigma A) \\
B\ar[r]^{f_0} & A_1\ar[r]^{f_1} & A_2\ar[r]^{f_2}  & \cdots \ar[r]^{f_{n-1}} & A_{n}\\
}$$
Then $$\xymatrixcolsep{2.5pc}
\xymatrix{
A\ar[r]^{\left(
                                            \begin{smallmatrix}
                                              -\alpha_0 \\
                                              f \\
                                            \end{smallmatrix}
                                          \right)}
& I_1\oplus B\ar[r]^{\left(
                                            \begin{smallmatrix}
                                              -\alpha_1 & 0 \\
                                              a_1 & f_0 \\
                                            \end{smallmatrix}
                                          \right)}
& I_2\oplus A_1\ar[r]^{\left(
                                            \begin{smallmatrix}
                                              -\alpha_2 & 0 \\
                                              a_2 & f_1 \\
                                            \end{smallmatrix}
                                          \right)}
& \cdots \ar[r]^{\left(
                                            \begin{smallmatrix}
                                              -\alpha_{n-1} & 0 \\
                                              a_{n-1} & f_{n-2} \\
                                            \end{smallmatrix}
                                          \right)}
& \ \ I_n\oplus A_{n-1}\ar[r]^{\left(
                                            \begin{smallmatrix}
                                             a_n & f_{n-1} \\
                                            \end{smallmatrix}
                                          \right)}
& A_n  \  (3.9)
}$$
is a right $n$-exact sequence. Note that the morphism $\left(
                                            \begin{smallmatrix}
                                              -\alpha_0 \\
                                              f \\
                                            \end{smallmatrix}
                                          \right)$
is $\mathcal{I}$-monic, thus (3.9) induces a right $(n+2)$-angle whose first morphism is $\underline{f}$.

(b) is dual to (a). 
(c) follows from (a) together with Remarks \ref{rem1}(c).
In fact, since $(\mathcal{C},\mathcal{S})$ is Frobenius it is easy to see that the endofunctor $\Sigma: \mathcal{C}/\mathcal{I}\rightarrow\mathcal{C}/\mathcal{I}$ is an equivalence.
We note that a right $n$-exact sequence
 in $\mathcal{C}$  is admissible $n$-exact if and only if the first morphism is $\mathcal{I}$-monic.
Since $(\mathcal{C}, \mathcal{S})$ is Frobenius, we have the following commutative diagram of admissible $n$-exact sequences
$$\xymatrix{
\Sigma^{-1}A_{n+1} \ar[r]^{\beta_0}\ar[d]^{b_{n+1}} & P_1 \ar[r]^{\beta_1}\ar[d]^{b_n} & P_2 \ar[r]^{\beta_2}\ar[d]^{b_{n-1}} & \cdots \ar[r]^{\beta_{n-1}} & P_n \ar[r]^{\beta_n}\ar[d]^{b_1} & A_{n+1} \ar@{=}[d] \\
A_0 \ar[r]^{f_0}\ar@{=}[d] & A_1 \ar[r]^{f_1}\ar[d]^{a_1} & A_2 \ar[r]^{f_2}\ar[d]^{a_2} & \cdots \ar[r]^{f_{n-1}} & A_n \ar[r]^{f_n}\ar[d]_{a_n} & A_{n+1}\ar[d]^{a_{n+1}}\\
A_0 \ar[r]^{\alpha_0} & I_1 \ar[r]^{\alpha_1} &  I_2 \ar[r]^{\alpha_2} & \cdots \ar[r]^{\alpha_{n-1}} & I_n \ar[r]^{\alpha_n} & \Sigma A_0,
}$$  which implies that $$A_0\xrightarrow{\underline{f_0}}A_1\xrightarrow{\underline{f_1}}A_2\xrightarrow{\underline{f_2}}\cdots\xrightarrow{\underline{f_{n}}}
A_{n+1}\xrightarrow{(-1)^n\underline{a_{n+1}}} \Sigma A_0$$ is a standard right $(n+2)$-angle in $\mathcal{C}/\mathcal{I}$ if and only if
$$\Sigma^{-1}A_{n+1}\xrightarrow{\Sigma^{-1}\underline{a_{n+1}}}A_0\xrightarrow{\underline{f_0}}A_1\xrightarrow{\underline{f_1}}
\cdots\xrightarrow{\underline{f_{n-1}}}A_n\xrightarrow{\underline{f_{n}}}
A_{n+1}$$ is a standard left $(n+2)$-angle in $\mathcal{C}/\mathcal{I}$. Thus the class of right $(n+2)$-angles and the class of left $(n+2)$-angles coincide.
\end{proof}

From now on, let $n\geq 3$. In the rest of this section we consider the right $n$-angulated categories arising from covariantly finite subcategories of $n$-angulated categories.

\begin{thm}\label{thm2}
Let $\mathcal{C}$ be an $n$-angulated category and $\mathcal{X}$ a covariantly finite subcategory. If for any object $A\in\mathcal{C}$ there exists an $n$-angle $$A\xrightarrow{\alpha_1}X_1\xrightarrow{\alpha_2}X_2\xrightarrow{\alpha_3}\cdots\xrightarrow{\alpha_{n-2}} X_{n-2}\xrightarrow{\alpha_{n-1}} B\xrightarrow{\alpha_{n}} \Sigma A$$
where $X_i\in\mathcal{X}$ and $\alpha_{1}$ is a left $\mathcal{X}$-approximation. Then the quotient $\mathcal{C}/\mathcal{X}$ is a right $n$-angulated category.
\end{thm}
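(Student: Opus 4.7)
The plan is to run the same strategy as in Theorem \ref{thm}, but substituting the additive-categorical tools used there by their $n$-angulated counterparts: the weak-cokernel factorizations should come from applying $\mathcal{C}(-,X)$ to an $n$-angle (which yields a long exact sequence of abelian groups, so consecutive morphisms of an $n$-angle are weak cokernels of each other), axiom (N3) of $\mathcal{C}$ should replace the weak cokernel construction in Proposition \ref{prop}, and the Bergh--Thaule mapping cone axiom \cite[Theorem 4.4]{[BT1]} should replace the $n$-pushout Lemma \ref{lem2}. Define $\Sigma:\mathcal{C}/\mathcal{X}\to\mathcal{C}/\mathcal{X}$ by $\Sigma A:=B$ where $B$ comes from the chosen $n$-angle for $A$; well-definedness on morphisms proceeds exactly as in Proposition \ref{prop}, since $\alpha_1$ is $\mathcal{X}$-monic and Lemma \ref{lem1} applies to the complex $\mathcal{C}(\alpha_\bullet,X)$ for $X\in\mathcal{X}$. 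Define $\Theta$ as the isomorphism closure of standard right $n$-angles: given any $n$-angle $A_1\xrightarrow{f_1}A_2\xrightarrow{f_2}\cdots\xrightarrow{f_{n-1}}A_n\xrightarrow{f_n}\Sigma A_1$ in $\mathcal{C}$ with $f_1$ being $\mathcal{X}$-monic, use (N3) to produce a morphism of $n$-angles into the chosen $n$-angle for $A_1$ and call $a_{n-1}:A_n\to B$ the last comparison morphism; then $A_1\xrightarrow{\underline{f_1}}A_2\to\cdots\to A_n\xrightarrow{(-1)^n\underline{a_{n-1}}}\Sigma A_1$ is the standard right $n$-angle.

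Axioms (RN1)(a), (b) are routine: $\Theta$ is clearly closed under isomorphisms, direct sums and summands, and the trivial $n$-angle $A\xrightarrow{1_A}A\to 0\to\cdots\to 0\to\Sigma A$ yields the trivial right $n$-angle by mapping it into the chosen $n$-angle for $A$. For (RN1)(c), given $f:A_1\to A_2$, replace it by $\bigl(\begin{smallmatrix}f\\ \alpha_1\end{smallmatrix}\bigr):A_1\to A_2\oplus X_1$, which is $\mathcal{X}$-monic because $\alpha_1$ is; complete this to an $n$-angle in $\mathcal{C}$ and extract a standard right $n$-angle whose first morphism agrees with $\underline{f}$ in $\mathcal{C}/\mathcal{X}$, since the $X_1$-component is killed. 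For (RN2), the rotated $n$-angle in $\mathcal{C}$ need not begin with an $\mathcal{X}$-monic morphism, so I follow the trick of Theorem \ref{thm}: the morphism $\bigl(\begin{smallmatrix}-f_2\\ a_1\end{smallmatrix}\bigr):A_2\to A_3\oplus X_1$ is $\mathcal{X}$-monic (because $\alpha_1=a_1 f_1$ factors through it), and applying the mapping cone axiom to the morphism of $n$-angles produces an $n$-angle in $\mathcal{C}$ whose associated standard right $n$-angle is isomorphic, with the same sign convention as in Theorem \ref{thm}, to the left rotation. For (RN3), given a commutative square in $\mathcal{C}/\mathcal{X}$, the difference $h_1 f_1-g_1 h_0$ factors through some $X\in\mathcal{X}$ as $bcf_0$ using the $\mathcal{X}$-monicity of $f_1$ (exactly as on page containing diagram (3.5)); replacing $h_1$ by $h_1-bc$ makes the square commute strictly in $\mathcal{C}$, and axiom (N3) of $\mathcal{C}$ then completes it to a morphism of $n$-angles which, by the $n$-angulated analogue of Lemma \ref{lem3}, descends to a morphism of standard right $n$-angles.

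Axiom (RN4) is the main obstacle, as it requires simultaneous control of three $n$-angles, a mapping-cone construction, and the correct signs. The approach is to transcribe the proof of (RN4) in Theorem \ref{thm}: first use (N3) to build the middle column $\varphi_i:A_i\to B_i$ fitting between the first and second given $n$-angles, then apply the mapping cone axiom of \cite{[BT1]} to the morphism of $n$-angles $(\mathrm{id}_{A_0},\varphi_1,\ldots,\varphi_{n-1},\mathrm{id}_{\Sigma A_0})$ to obtain the $n$-angulated analogue of sequence (3.7). Next, split off the $A_1$-summand (as in the three-row block diagram of Theorem \ref{thm}) to get an $n$-angle in $\mathcal{C}$ whose first morphism $\bigl(\begin{smallmatrix}f_2\\ \varphi_2\end{smallmatrix}\bigr)$ is $\mathcal{X}$-monic (this is verified by the same Hom-chase used in Theorem \ref{thm}), and then pass to $\mathcal{C}/\mathcal{X}$ to produce the required standard right $n$-angle. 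The two compatibility relations --- the fact that $(\mathrm{id},\varphi_2,\ldots,\varphi_n)$ is a morphism of right $n$-angles and that $h_n\psi_n=\Sigma f_1\cdot g_n$ --- then follow from naturality of the constructions, verified by constructing the same large commutative diagram that closes the proof of Theorem \ref{thm}. The principal difficulty is organizing the signs in the mapping cone so that they match the prescribed matrices in (RN4)(b); this requires the same $(-1)^{n+1}$ twist and identification of isomorphic $n$-angles that appeared in the display containing diagram (3.7)--(3.8).
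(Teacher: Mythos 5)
Your proposal is correct and takes essentially the same approach as the paper: the paper defines $T$ and the standard right $n$-angles exactly as you do and then defers the axiom verification to \cite[Theorem 3.8]{[L]}, whose argument is the transcription you describe (axiom (N3) of $\mathcal{C}$ for the comparison morphisms, exactness of $\mathcal{C}(-,X)$ applied to $n$-angles for the weak-cokernel factorizations, and the Bergh--Thaule mapping cone axiom in place of Lemma \ref{lem2}). The one discrepancy is the sign: the paper defines the standard right $n$-angle in this theorem with last morphism $\underline{a_n}$ and no factor $(-1)^n$, so for odd $n$ your class $\Theta$ differs from the paper's by a global sign on the last morphism --- a convention issue to be kept consistent through the verification of (RN2) and (RN4), not a gap.
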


\begin{proof}
For any object $A\in\mathcal{C}$, fix an $n$-angle $$A\xrightarrow{\alpha_1}X_1\xrightarrow{\alpha_2}X_2\xrightarrow{\alpha_3}\cdots\xrightarrow{\alpha_{n-2}} X_{n-2}\xrightarrow{\alpha_{n-1}} B\xrightarrow{\alpha_{n}} \Sigma A$$
where $X_i\in\mathcal{X}$ and $\alpha_{1}$ is a left $\mathcal{X}$-approximation. For any morphism $f:A\rightarrow A'$, since $\alpha_1$ is a left $\mathcal{X}$-approximation, we have the following commutative diagram
$$\xymatrix{
A\ar[r]^{\alpha_1}\ar[d]^{f} & X_1\ar[r]^{\alpha_2}\ar[d]^{x_1} & X_2\ar[r]^{\alpha_3}\ar[d]^{x_2} & \cdots \ar[r]^{\alpha_{n-2}} & X_{n-2}\ar[r]^{\alpha_{n-1}}\ar[d]^{x_{n-2}}  & B \ar[d]^{g}\ar[r]^{\alpha_n} & \Sigma A \ar[d]^{\Sigma f}\\
A'\ar[r]^{\alpha_1'} & X'_1\ar[r]^{\alpha_2'} & X'_2\ar[r]^{\alpha_3'} & \cdots \ar[r]^{\alpha_{n-2}'} & X'_{n-2}\ar[r]^{\alpha_{n-1}'} &  B' \ar[r]^{\alpha_n'} & \Sigma A'.\\
}$$ Define a functor $T:\mathcal{C}/\mathcal{X}\rightarrow\mathcal{C}/\mathcal{X}$ such that $TA=B$ and $T\underline{f}=\underline{g}$. It is easy to see that $T$ is a well defined additive functor.
Let $$A_1\xrightarrow{f_1}A_2\xrightarrow{f_2}A_3\xrightarrow{f_3}\cdots\xrightarrow{f_{n-1}}A_{n}\xrightarrow{f_n}\Sigma A_1$$ be an $n$-angle where $f_1$ is $\mathcal{X}$-monic. Then there exists the following commutative diagram
$$\xymatrix{
A_1 \ar[r]^{f_1}\ar@{=}[d] & A_2 \ar[r]^{f_2}\ar[d]^{a_2} & A_3 \ar[r]^{f_3}\ar[d]^{a_3} & \cdots \ar[r]^{f_{n-2}} & A_{n-1} \ar[r]^{f_{n-1}}\ar[d]^{a_{n-1}}& A_{n} \ar[d]^{a_{n}}\ar[r]^{f_n} & \Sigma A_1 \ar@{=}[d]  \\
A_1 \ar[r]^{\alpha_1} & X_1 \ar[r]^{\alpha_2} & X_2 \ar[r]^{\alpha_3} & \cdots \ar[r]^{\alpha_{n-2}} & X_{n-2} \ar[r]^{\alpha_{n-1}} & TA_1 \ar[r]^{\alpha_n} & \Sigma A_1. \\
}$$
The $n$-$T$-sequence $$A_1\xrightarrow{\underline{f_1}}A_2\xrightarrow{\underline{f_2}}A_3\xrightarrow{\underline{f_3}}\cdots\xrightarrow{\underline{f_{n-1}}}
A_{n}\xrightarrow{\underline{a_{n}}} TA_1$$ is called a standard right $n$-angle in $\mathcal{C}/\mathcal{X}$. Denote by $\Theta$ the class of $n$-$T$-sequences which are isomorphic to standard right $n$-angles. We can show that $(\mathcal{C}/\mathcal{X}, T, \Theta)$ is a right $n$-angulated category. Since the proof  is similar to \cite[Theorem 3.8]{[L]}, we omit it.
\end{proof}

In particular, if $n=3$, then we get the following well known result.

\begin{cor}
Let $\mathcal{C}$ be a triangulated category and $\mathcal{X}$ a covariantly finite subcategory. Then the quotient $\mathcal{C}/\mathcal{X}$ is a right triangulated category.
\end{cor}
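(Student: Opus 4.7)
The plan is to deduce this corollary as a direct specialisation of Theorem \ref{thm2} to the case $n=3$. Two identifications legitimise this reduction: a triangulated category is exactly a $3$-angulated category (in the sense of Geiss--Keller--Oppermann), and by Remark \ref{rem1}(a) a right $3$-angulated category coincides with the notion of right triangulated category of Beligiannis--Marmaridis. So it suffices to verify that $(\mathcal{C},\mathcal{X})$ satisfies the hypothesis of Theorem \ref{thm2} for $n=3$.

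That hypothesis requires, for each $A\in\mathcal{C}$, an $n$-angle
$$A\xrightarrow{\alpha_1}X_1\xrightarrow{\alpha_2}\cdots\xrightarrow{\alpha_{n-2}}X_{n-2}\xrightarrow{\alpha_{n-1}}B\xrightarrow{\alpha_n}\Sigma A$$
with $X_i\in\mathcal{X}$ and $\alpha_1$ a left $\mathcal{X}$-approximation. For $n=3$ this collapses to a single $3$-angle
$$A\xrightarrow{\alpha_1}X_1\xrightarrow{\alpha_2}B\xrightarrow{\alpha_3}\Sigma A,$$
so only one object $X_1\in\mathcal{X}$ must be produced. First I invoke the hypothesis that $\mathcal{X}$ is covariantly finite in $\mathcal{C}$ to obtain a left $\mathcal{X}$-approximation $\alpha_1\colon A\to X_1$ with $X_1\in\mathcal{X}$. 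Then, using that $\mathcal{C}$ is triangulated, I apply axiom (TR1) to embed $\alpha_1$ in a distinguished triangle $A\to X_1\to B\to\Sigma A$; distinguished triangles being precisely the $3$-angles, this is the required $3$-angle.

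Having verified the hypothesis, I apply Theorem \ref{thm2} directly: it concludes that $\mathcal{C}/\mathcal{X}$ is a right $n$-angulated category, which for $n=3$ means a right triangulated category, as claimed. There is essentially no obstacle; the substantive construction of the suspension functor $T$, the class $\Theta$ of standard right $n$-angles, and the verification of axioms (RN1)--(RN4) have already been carried out in the general setting of Theorem \ref{thm2}, and the only specific-to-$n=3$ checks (existence of a left $\mathcal{X}$-approximation and completion of a morphism to a triangle) are immediate from the defining properties of covariantly finite subcategories and of triangulated categories respectively.
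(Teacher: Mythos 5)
Your proposal is correct and matches the paper's approach: the paper states this corollary as an immediate specialisation of Theorem \ref{thm2} to $n=3$, exactly as you do. Your explicit verification of the hypothesis (left $\mathcal{X}$-approximation from covariant finiteness, then (TR1) to complete it to a triangle) is the implicit content of the paper's "in particular, if $n=3$" remark.
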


The following corollary  follows immediately from Theorem \ref{thm2} and its dual.

\begin{cor} (cf. \cite[Theorem 3.8]{[L]})
Let $\mathcal{C}$ be an $n$-angulated category and $\mathcal{X}$ a functorially finite  subcategory of $\mathcal{C}$. If $(\mathcal{C},\mathcal{C})$ is an $\mathcal{X}$-mutation pair, that is, for any object $A\in\mathcal{C}$ or $B\in\mathcal{C}$ there exists an $n$-angle $$A\xrightarrow{\alpha_1}X_1\xrightarrow{\alpha_2}X_2\xrightarrow{\alpha_3}\cdots\xrightarrow{\alpha_{n-2}} X_{n-2}\xrightarrow{\alpha_{n-1}} B\xrightarrow{\alpha_{n}} \Sigma A$$
where $X_i\in\mathcal{X}$ , $\alpha_{1}$ is a left $\mathcal{X}$-approximation and $\alpha_{n-1}$ is a right $\mathcal{X}$-approximation.
Then the quotient $\mathcal{C}/\mathcal{X}$ is an $n$-angulated category.
\end{cor}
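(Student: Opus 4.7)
The plan is to combine Theorem \ref{thm2} with its dual through Remarks \ref{rem1}(c). First, since $\mathcal{X}$ is covariantly finite and the $\mathcal{X}$-mutation pair hypothesis produces, for every object $A$, an $n$-angle
$$A\xrightarrow{\alpha_1}X_1\xrightarrow{\alpha_2}\cdots\xrightarrow{\alpha_{n-2}}X_{n-2}\xrightarrow{\alpha_{n-1}}B\xrightarrow{\alpha_n}\Sigma A$$
with $X_i\in\mathcal{X}$ and $\alpha_1$ a left $\mathcal{X}$-approximation, Theorem \ref{thm2} yields a right $n$-angulated structure $(\mathcal{C}/\mathcal{X},T,\Theta)$, where $TA=B$. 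Dually, since $\mathcal{X}$ is contravariantly finite and the same $n$-angles have $\alpha_{n-1}$ as a right $\mathcal{X}$-approximation, the dual of Theorem \ref{thm2} yields a left $n$-angulated structure $(\mathcal{C}/\mathcal{X},\Omega,\Phi)$, where $\Omega B=A$.

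Next I would verify that $\Omega$ is a quasi-inverse of $T$. By construction, the very same defining $n$-angle that sets $TA=B$ simultaneously serves as a defining $n$-angle for $\Omega$ applied to $B$, so $\Omega TA\cong A$ naturally in $A$. Conversely, starting from an object $B$, the mutation-pair condition supplies an $n$-angle of the above form computing $\Omega B=A$, and this same $n$-angle computes $T\Omega B\cong B$. Naturality in both directions follows from the defining universal property of left/right $\mathcal{X}$-approximations together with Lemma \ref{lem1}, exactly as in the functoriality proof in Theorem \ref{thm2}.

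Then I would show that the classes $\Theta$ and $\Phi$ of right and left $n$-angles coincide. Given a standard right $n$-angle
$$A_1\xrightarrow{\underline{f_1}}A_2\xrightarrow{\underline{f_2}}\cdots\xrightarrow{\underline{f_{n-1}}}A_n\xrightarrow{\underline{a_n}}TA_1$$
induced by an $n$-angle $A_1\to A_2\to\cdots\to A_n\to\Sigma A_1$ in $\mathcal{C}$ with $f_1$ being $\mathcal{X}$-monic, the same underlying $n$-angle in $\mathcal{C}$ also has $f_{n-1}$ being $\mathcal{X}$-epic (by rotating and using that the last morphism of the mutation defining $n$-angle is a right $\mathcal{X}$-approximation, together with the factorization argument used in Corollary \ref{cor}(c)). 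This shows the sequence above is also a standard left $n$-angle, and conversely. Hence $\Theta=\Phi$.

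Having verified both hypotheses of Remarks \ref{rem1}(c), we conclude that $(\mathcal{C}/\mathcal{X},T,\Theta)$ is an $n$-angulated category. The main obstacle is the compatibility step: one must check that an $n$-angle of $\mathcal{C}$ in which $f_1$ is $\mathcal{X}$-monic also has $f_{n-1}$ an $\mathcal{X}$-epic (and vice versa), so that the two classes of diagrams really coincide. This will follow by the same approximation-lifting argument that makes $T$ and $\Omega$ mutually inverse, but it is the one place where the full strength of the \emph{mutation pair} hypothesis (both approximation conditions on the same $n$-angle) is needed.
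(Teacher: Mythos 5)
Your proposal is correct and takes essentially the same approach as the paper, which obtains this corollary by combining Theorem \ref{thm2} with its dual via Remarks \ref{rem1}(c), just as in the proof of Corollary \ref{cor}(c). The compatibility step you single out does go through: if $f_1$ is $\mathcal{X}$-monic, then comparing the given $n$-angle with the defining $n$-angle of $TA_1$ shows that any morphism $X\to A_n$ with $X\in\mathcal{X}$ lands, after composing with $a_n$, in the image of the right $\mathcal{X}$-approximation $\alpha_{n-1}$, hence is killed by $f_n$, so $f_{n-1}$ is $\mathcal{X}$-epic (and dually), which is exactly where both approximation conditions on the same defining $n$-angle are used.
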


Before stating another corollary, we give some definitions.
Let $\mathcal{C}$ be an $n$-angulated category. An object $I\in\mathcal{C}$ is called {\em injective} if for any morphism $f:A\rightarrow B$ and any morphism $g:A\rightarrow I$, there exists a morphism $h: B\rightarrow I$ such that $g=hf$. Denote by $\mathcal{I}$ the subcategory of injectives. We say $\mathcal{C}$ {\em has enough injectives} if for any object $A\in\mathcal{C}$, there exists an angle
$$A\rightarrow I_1\rightarrow I_2\rightarrow\cdots\rightarrow I_{n-2}\rightarrow B\rightarrow \Sigma A$$ with $I_i\in\mathcal{I}$.
The notion of {\em having enough projectives} is defined dually. Denote by $\mathcal{P}$ the subcategory of projectives.
If $\mathcal{C}$ has enough injectives, enough projectives and if injectives and projectives coincide, then we say $\mathcal{C}$ is {\em Frobenius}.

The following result is a higher analogue of \cite[Theorem 7.2]{[B]}.

\begin{cor}
Let $\mathcal{C}$ be an $n$-angulated category.

(a) If $\mathcal{C}$ has enough injectives, then the quotient $\mathcal{C}/\mathcal{I}$ is a right $n$-angulated category.

(b) If $\mathcal{C}$ has enough projectives, then the quotient $\mathcal{C}/\mathcal{P}$ is a left $n$-angulated category.

(c) If $\mathcal{C}$ is Frobenius, then the quotient $\mathcal{C}/\mathcal{I}$ is an $n$-angulated category.
\end{cor}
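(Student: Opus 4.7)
The plan is to deduce the three parts of the corollary from Theorem \ref{thm2}, its dual, and Remarks \ref{rem1}(c), mirroring the strategy used in the proof of Corollary \ref{cor}.

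For part (a), I will first verify that the subcategory $\mathcal{I}$ of injectives is covariantly finite in $\mathcal{C}$. Given $A\in\mathcal{C}$, the hypothesis of enough injectives supplies an $n$-angle $A\xrightarrow{\alpha_1} I_1\to\cdots\to I_{n-2}\to B\to \Sigma A$ with $I_i\in\mathcal{I}$. By the defining property of injective objects, any morphism $g\colon A\to I$ with $I\in\mathcal{I}$ extends along $\alpha_1$ to some $h\colon I_1\to I$, so $\alpha_1$ is $\mathcal{I}$-monic and hence a left $\mathcal{I}$-approximation of $A$. This $n$-angle is exactly of the form required by the hypothesis of Theorem \ref{thm2}, which then directly yields a right $n$-angulated structure on $\mathcal{C}/\mathcal{I}$. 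Part (b) follows by the dual argument, applied to the contravariantly finite subcategory $\mathcal{P}$ using the dual of Theorem \ref{thm2}.

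For part (c), assume $\mathcal{C}$ is Frobenius so that $\mathcal{I}=\mathcal{P}$. Parts (a) and (b) then endow $\mathcal{C}/\mathcal{I}$ with a right $n$-angulated structure $(\mathcal{C}/\mathcal{I},T,\Theta)$ and a left $n$-angulated structure $(\mathcal{C}/\mathcal{I},\Omega,\Phi)$ on the same underlying category. By Remarks \ref{rem1}(c), it suffices to show that $\Omega$ is a quasi-inverse of $T$ and that $\Theta=\Phi$. For the first, note that for any $A\in\mathcal{C}$ the $n$-angle $\Omega A\to P_1\to\cdots\to P_{n-2}\to A\to \Sigma\Omega A$ provided by enough projectives has $P_i\in\mathcal{P}=\mathcal{I}$, with $\Omega A\to P_1$ automatically $\mathcal{I}$-monic by injectivity of $P_1$, so $T(\Omega A)\cong A$ in $\mathcal{C}/\mathcal{I}$; the identity $\Omega(T A)\cong A$ is symmetric. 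For $\Theta=\Phi$, given a standard right $n$-angle $A_1\xrightarrow{\underline{f_1}}\cdots\xrightarrow{\underline{f_{n-1}}} A_n\xrightarrow{\underline{a_n}}TA_1$, I will construct a comparison diagram in $\mathcal{C}$ between the defining $n$-angle with injective middle terms and an $n$-angle $\Omega A_n\to P_1\to\cdots\to P_{n-2}\to A_n\to \Sigma\Omega A_n$ with projective middle terms, in the style of the diagram appearing in the proof of Corollary \ref{cor}(c), and deduce that the same $n$-$T$-sequence is isomorphic to a standard left $n$-angle with respect to $\Omega$.

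The main obstacle is the equality $\Theta=\Phi$ in (c): it demands a careful interleaving of two $n$-angles in $\mathcal{C}$ (one built from injectives, one from projectives) via the lifting properties of injective and projective objects, together with bookkeeping of the signs introduced when rotating and reindexing these $n$-angles. Once this equality is established, Remarks \ref{rem1}(c) promotes the structures on $\mathcal{C}/\mathcal{I}$ to a genuine $n$-angulated structure, completing (c).
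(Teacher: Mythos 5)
Your proposal is correct and follows exactly the route the paper intends (the paper states this corollary without proof, but your argument — covariant finiteness of $\mathcal{I}$ via the extension property of injectives, Theorem \ref{thm2} for (a), its dual for (b), and Remarks \ref{rem1}(c) combined with the comparison of injective and projective $n$-angles for (c) — is precisely the analogue of the paper's proof of Corollary \ref{cor}). The only point to flesh out is the naturality of the isomorphisms $T\Omega\cong 1$ and $\Omega T\cong 1$ and the sign bookkeeping in $\Theta=\Phi$, which you correctly flag.
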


\end{document}